\newtheorem{lemma}{Lemma}[section]
\newtheorem{proposition}{Proposition}[section]
\newtheorem{definition}{Definition}[section]
\newtheorem{remark}{Remark}[section]
\definecolor{OliveGreen}{HTML}{556B2F}
\definecolor{prettyBlue}{HTML}{2196F3}
\definecolor{MidnightBlue}{HTML}{191970}
\definecolor{gray}{rgb}{0.5, 0.5, 0.5}
\definecolor{BrickRed}{rgb}{0.8, 0.25, 0.33}
\definecolor{Sepia}{rgb}{0.44, 0.26, 0.08}
\definecolor{NavyBlue}{rgb}{0.0, 0.0, 0.5}
\definecolor{ForestGreen}{rgb}{0.0, 0.27, 0.13}
\definecolor{Sepia}{rgb}{0.44, 0.26, 0.08}
\definecolor{deepblue}{rgb}{0,0,0.5}
\definecolor{deepred}{rgb}{0.6,0,0}
\definecolor{deepgreen}{rgb}{0,0.5,0}
\definecolor{carmine}{rgb}{0.59, 0.0, 0.09}
\definecolor{dartmouthgreen}{rgb}{0.05, 0.5, 0.06}
\definecolor{tawny}{rgb}{0.8, 0.34, 0.0}
\definecolor{tealblue}{rgb}{0.21, 0.46, 0.53}
\definecolor{cordovan}{rgb}{0.54, 0.25, 0.27}
\definecolor{oxfordblue}{rgb}{0.0, 0.13, 0.28}
\journal{Computer Physics Communications}
\begin{document}

\begin{frontmatter}

%% Title, authors and addresses

%% use the tnoteref command within \title for footnotes;
%% use the tnotetext command for theassociated footnote;
%% use the fnref command within \author or \address for footnotes;
%% use the fntext command for theassociated footnote;
%% use the corref command within \author for corresponding author footnotes;
%% use the cortext command for theassociated footnote;
%% use the ead command for the email address,
%% and the form \ead[url] for the home page:
%%\title{Title\tnoteref{label1}}
%% \tnotetext[label1]{}
%% \author{Name\corref{cor1}\fnref{label2}}
%% \ead{email address}
%% \ead[url]{home page}
%% \fntext[label2]{}
%% \cortext[cor1]{}
%% \address{Address\fnref{label3}}
%% \fntext[label3]{}

\newdefinition{asp}{Assumption}
\newdefinition{rmk}{Remark}

%\title{A hybridizable interior penalty discontinuous Galerkin method for locally degenerate elliptic problems}
%\title{A symmetric interior penalty hybridizable discontinuous Galerkin method for locally degenerate elliptic problems}
%\title{A hybridizable interior penalty discontinuous Galerkin method for degenerate second-order elliptic problems}
\title{Families of hybridizable interior penalty discontinuous Galerkin methods for degenerate advection-diffusion-reaction problems}

\author[add1]{Gr\'egory Etangsale}
\ead{gregory.etangsale@univ-reunion}
\author[add2]{Marwan Fahs}
\ead{fahs@unistra.fr}
\author[add1]{Vincent Fontaine\corref{cor1}}
\ead{vincent.fontaine@univ-reunion}
\cortext[cor1]{Corresponding and Principal author}
\author[add3]{Ali Raeisi Isa-Abadi }
\ead{ali_raeisi@sku.ac.ir}
%% \ead[url]{home page}
%% \fntext[label2]{}
%% \address{Address\fnref{label3}}
%% \fntext[label3]{}

\address[add1]{Department of Building and Environmental Sciences, University of La R\'eunion - South Campus, France}
\address[add2]{$Universit\acute{e}$ de Strasbourg, CNRS, ENGEES, LHYGES UMR 7517, F-67000 Strasbourg, France}
\address[add3]{Department of Water Engineering, Shahrekord University, Shahrekord, Iran}

%\cortext[ca]{Principal author}

\begin{abstract}
We analyze families of primal high-order hybridizable discontinuous Galerkin (HDG) methods for solving degenerate (second-order) elliptic problems. One major trouble regarding this class of PDEs concerns its mathematical nature, which may be nonuniform over the domain. 
% Quality control editor: Abbreviations and acronyms (e.g., PDE) are typically defined the first time the term is used within the abstract and again in the main text and then used throughout the remainder of the manuscript. Please consider adhering to this convention. The target journal may have a list of abbreviations that are considered common enough that they do not need to be defined.
Due to the local degeneracy of the diffusion term, it can be purely hyperbolic in a subregion and elliptic in the rest. This problem is thus quite delicate to solve since the exact solution is discontinuous at interfaces separating both elliptic and hyperbolic parts. The proposed HDG method is developed in a unified and compact fashion. It can efficiently handle pure diffusive or advective regimes and intermediate regimes that combine the above mechanisms for a wide range of P\'eclet numbers, including the delicate situation of local evanescent diffusion. To this end, an adaptive stabilization strategy based on addition of jump-penalty terms is then considered. A $\theta$-upwind-based scheme is favored for the hyperbolic region, and an inspired Scharfetter--Gummel-based technique is preferred for the elliptic region. The well-posedness of the HDG method is also discussed by analyzing the consistency and discrete coercivity properties. Extensive numerical experiments are finally considered to verify the model's robustness for all the abovementioned regimes. 
\end{abstract}

\begin{keyword}
%% keywords here, in the form: keyword \sep keyword
Primal hybridizable discontinuous Galerkin\sep interior penalty methods\sep degenerate second-order elliptic problems\sep adaptive penalty strategy\sep upwind-based scheme \sep Scharfetter--Gummel scheme% \sep stability analysis
\sep extensive numerical experiments

%% MSC codes here, in the form: \MSC code \sep code
%% or \MSC[2008] code \sep code (2000 is the default)

\end{keyword}

\end{frontmatter}

%\newpage
%\tableofcontents
%\linenumbers

\section{Introduction}
\label{Intro}

Degenerate second-order elliptic equations are well-established models to describe a wide variety of phenomena in real-life applications, such as pure diffusion or advection problems, and mixed problems combining the above mechanisms for a wide range of P\'eclet numbers \citep{oleinik1973}. A detrimental situation may arise in the context of locally evanescent diffusivity. Indeed, its mathematical nature is nonuniform over the entire domain, as it can be purely hyperbolic in a subregion and elliptic in the rest. Consequently, the state variable can be discontinuous at interfaces separating both subregions according to the wind flow sense \citep{dipietro:hal-01079342,dipietro:hal-01023302,ESZIMA08}. This critical situation is easily encountered in the context of mass transport in fractured porous media. It is well known that fractures deeply affect the transport phenomena since they represent the preferential fluid flow paths. The large variability of the velocity's magnitude indicates that the advection mechanism is predominant in the fractures as compared with the rest of the domain. During recent decades, different authors have analyzed this model problem, although mainly in the context of discontinuous Galerkin (DG) methods (see, e.g., \citep{dipietro:hal-01023302,ESZIMA08,HSSSIAM02} and the extensive references therein). The success story of DG methods is because they combine advantages of finite volume and finite element methods, and they are well-suited to capture large gradients or discontinuities of exact solutions \citep{RFGYF2020104744}. Despite all of these advantages, DG methods are generally more expensive than most other numerical methods due to their high number of coupled degrees of freedom (DOFs) and their large stencils. It is in this context that the HDG methods were initially devised: as a way to circumvent these drawbacks.\par 
The HDG methods were first introduced by Cockburn \al in \citep{CGLSIAM09} and have been applied successfully to various physical problems \citep{Cockburn2016,CQS2012, DijouxA2M19,Nguyen2011NS,NPClinearconv,LehrenfeldTH,Oikawa2015,KHCRSIAM2019}. They can be considered as a new class of DG methods that are eligible for static condensation. For that application, an additional discrete variable is introduced corresponding to the trace approximation of the state variable on the mesh skeleton. Thus, the interior-based DOFs can be easily eliminated by solving a local problem at the element level so that only skeleton-based DOFs remain. The problem is then closed by (weakly) imposing transmission conditions on the mesh skeleton, leading to a smaller and sparser final matrix system. In practice, DG methods and their HDG counterparts do not coincide because the latter use a richer definition of numerical traces. Consequently, HDG methods turn out to be more accurate than their predecessors in many situations, and they are thus more efficiently implementable and highly parallelizable \citep{etangsale:hal-02893064,Kirby2012}. Despite all of these assets, the literature is relatively scarce concerning the resolution of degenerate elliptic equations by the class of HDG methods, which is the purpose of the present work. To the best of our knowledge, only Di Pietro \al recently designed a primal discontinuous skeletal method based on the hybrid-high order (HHO) formalism for the diffusive part to address this kind of issue \citep{dipietro:hal-01079342,dipietro:hal-01818201}.\par
In the present paper, we focus instead on the class of interior penalty HDG methods denoted by H-IP and its three well-known variants, namely, the incomplete (H-IIP), symmetric (H-SIP), and nonsymmetric (H-NIP) schemes \citep{FabienNM2019,wells2011analysis}. Indeed, the families of interior penalty methods are well-suited for solving degenerate elliptic equations since the diffusion term might not be invertible at every point of the domain; \ie they belong to the class of $\bkappa$-methods. For this aim, both diffusive and advective-reactive contributions are discretized separately. The stability of these contributions is ensured by adding jump-penalty terms, which correspond to the discrepancy between interior- and interface-based DOFs, on the mesh skeleton. The stabilization penalty parameters are selected in accordance with the nature of the local cellwise problem reducing to an upwinding-based scheme in the hyperbolic subregion \citep{BMS2004M3AS} and the Scharfetter--Gummel (SG) scheme elsewhere \citep{BREZZI2006681,dipietro:hal-01079342}. Thus, the stated H-IP formalism can treat in an automatic fashion the pure diffusion or  advection-reaction processes or (mixed) advection-diffusion-reaction processes characterized by a diffusion- or advection-dominated regime, \ie a wide range of P\'eclet numbers - including the delicate situation of local evanescent diffusion. A stability analysis is then investigated by establishing the consistency and discrete coercivity properties. 
%We then extend the H-IP formalism to time-dependent problems by employing backward difference formulaes (BDF) for the approximation of the time derivative. 
Numerical experiments are also presented to prove the following assertions, such as the high-order accuracy and robustness of the discretization method.\par
The material is organized as follows. In Section \ref{BVP}, we describe the homogeneous Dirichlet boundary value problem in the sense of Fichera \citep{oleinik1973} by introducing some specific notations, and we precisely define the corresponding discrete setting in Section \ref{Discrete_setting}. In Section \ref{HIP}, we derive the discrete bilinear and linear operators of the discretization method, briefly discuss the static condensation and stability analysis, and precisely delineate the stabilization strategy. In Section \ref{Numerical_exp}, extensive numerical experiments are investigated using $h$- and $p$-refinement strategies for all abovementioned regimes. We end with some concluding remarks and perspectives.

\section{Boundary value problem}
\label{BVP}

%\subsection{The well-posed boundary value problem}

We consider the stationary linear advection-diffusion-reaction model problem in its conservative form, % with non-homogeneous Dirichlet boundary conditions,
\begin{equation}
\label{primal-ADR}
\begin{array}{rccll}
\quad\dvg(-\bkappa\grd u+\bbeta u)+\gamma u&=&f &\mbox{ in }& \Omega,%\\
%u&=&g &\mbox{ on }&\BOmega,
\end{array}
\end{equation}
where $\Omega\subset\mathbb{R}^d$ is a bounded polyhedral domain ($d\geq 2$) with boundary $\partial\Omega\subset\R^{d-1}$. The boldface fonts are used throughout the paper to characterize any vector- or matrix-valued functions. In the physical context of contaminant transport in a porous media, $\bkappa:\Domain\rightarrow\R^{d,d}$ represents an anisotropic heterogeneous dispersion tensor, which is itself a function of the Darcy velocity field $\bbeta:\Domain\rightarrow\R^{d}$, $\gamma:\Domain\rightarrow\R^{+}$, the reaction coefficient, and $f:\Domain\rightarrow\R$, a forcing term. We then assume that the constitutive coefficients of \eqref{primal-ADR} satisfy the following minimal regularity requirements:
\begin{itemize}
\item[$\ast$]$\bkappa\in[L^{\infty}(\Omega)]^{d,d}$ is a symmetric positive semidefinite matrix-valued function verifying that
\begin{equation}
\label{spsd}
\underbar{$\kappa$}\norm{\vect{\zeta}}{}{}^2\leq\vect{\zeta}^t\bkappa(x)\vect{\zeta}\leq\bar{\kappa}\norm{\vect{\zeta}}{}{}^2,\quad\forall\vect{\zeta}\in\R^d,\quad\ale\quad x\in\Domain.
\end{equation}
where $\bar{\kappa}\geq\underbar{$\kappa$}\geq 0$ denote the largest and smallest eigenvalues of $\bkappa$, respectively. Under the hypothesis \eqref{spsd}, we shall assume the existence of a subdomain $\Domaine$ (resp. $\Domainh$) corresponding to the elliptic (resp. hyperbolic) region such that 
\begin{equation}
\label{hypell}
\begin{array}{lclc}
\Domaine &\eqbydef &\{x\in\Domain\,:\;\vect{\zeta}^t\bkappa(x)\vect{\zeta}> 0,\quad\forall\vect{\zeta}\in\R^d\},\\
\Domainh &\eqbydef &\{x\in\Domain\,:\;\vect{\zeta}^t\bkappa(x)\vect{\zeta}=0,\quad\forall\vect{\zeta}\in\R^d\},
\end{array}
\end{equation}
verifying that $\Domain=\Domaine\cup\Domainh$ and $\Domaine\cap\Domainh=\emptyset$ (nonoverlapping subregions).
\item[$\ast$]$\bbeta\in[L^{\infty}(\Omega)]^{d}$ is \st $\dvg\bbeta\in L^{\infty}(\Omega)$, 
\item[$\ast$]$\gamma\in L^{\infty}(\Omega)$ verifying that the following standard coercivity condition holds 
\begin{equation}
\label{coercivity-cond}
\exists\gamma_0\in\R\quad\textrm{s.t.}\quad\gamma(x)+\dfrac{1}{2}\dvg\bbeta(x)\geq\gamma_0>0, \quad\ale\quad x\in\Domain.
\end{equation}
\end{itemize}
Let us now introduce the following disjoint boundary sets as defined by Ole\u{\i}nik and Radkevi\v{c} \citep{oleinik1973} (see, \eg Di Pietro \al \citep{dipietro:hal-01079342} and references therein);
\begin{equation}
\label{Fichera}
\begin{array}{clc}
&\gammam\eqbydef\{x\in\BOmega\,:\;\vect{n}^t\bkappa(x)\vect{n}>0\quad\textrm{or}&\bbeta\cdot\vect{n}<0\},\\
%&\Gamma_{-} =\{x\in\BOmega\setminus\Gamma_0\,:\; \bbeta\cdot\vect{n}<0\},\\
&\gammap\eqbydef\{x\in\BOmega\,:\;\vect{n}^t\bkappa(x)\vect{n}=0\quad\textrm{and} &\bbeta\cdot\vect{n}\geq 0\},
\end{array}
\end{equation}
where $\vect{n}$ denotes the unit outward normal to $\BOmega$. Thus, the disjoint subsets $\gammapm$ will be referred to as the \textit{nondegenerate inflow} and \textit{degenerate outflow/no-flow} parts of the boundary $\BOmega$, respectively, verifying that $\BOmega=\gammam\cup\gammap$ and $\gammam\cup\gammap=\emptyset$. For clarity of our exposition, we supplement the partial differential equation \citeq{primal-ADR} with the following homogeneous Dirichlet boundary conditions, namely, 
\begin{equation}
\label{DirBC}
u=0\quad\mathrm{on}\quad\gammam.
\end{equation}
%where $g$ is a given prescribed data that we assume to be sufficiently regular. 
Let us remark that the Dirichlet boundary conditions are only prescribed for portions of $\BOmega$ touching the elliptic region or the hyperbolic region, provided that the advective field flows into the domain. Let us now summarize some physical situations commonly encountered in the literature for which Dirichlet boundary conditions \eqref{DirBC} adapt automatically:
\begin{itemize}
\item[$\blacktriangleright$] \underline{Nondegenerate problems}. Here, $\bkappa$ is assumed to be a symmetric positive definite matrix-valued function on the whole domain $\Domain$. Hence, $\Domaine=\Domain$ and the Dirichlet boundary condition is automatically enforced for the whole boundary $\gammam=\BOmega$. This situation includes the pure diffusive regime as well as the mixed advective-diffusive regime. Here, the weak solution $u$ of the problem \eqref{primal-ADR} respects some regularity requirements \ie $u\in H^1(\Domain)$, but it may present some sharp fronts along the characteristic direction $\bbeta$, particularly for the advective-dominated regime.  
\item[$\blacktriangleright$] \underline{Fully degenerate problems}. Here, $\Domaine=\emptyset$ and we recover the standard advection-reaction problem defined in $\Domainh$ respecting the coercivity condition \eqref{coercivity-cond} and the usual definition of the inflow $\Gamma^{-}$ or outflow $\Gamma^{+}$ parts of the boundary $\BOmega$. Following these assumptions, the problem is well-posed with no smoothing properties \ie $u\in\LO$, and discontinuities in the solution $u$ induced by $f$ will propagate along the flow field $\bbeta$, giving rise to internal layers. 
\item[$\blacktriangleright$] \underline{Locally degenerate problems}. We assume that both $\Domainh$ and $\Domaine$ are nonempty subsets. The model problem is then purely hyperbolic in $\Domainh$ and elliptic in the rest. Thus, we now define the common interface $\interf\eqbydef\{x\in\Domain\,:\;\partial\Domainh\cap\partial\Domaine\}$. We emphasize that such problems are particularly delicate to solve since the solution can be discontinuous at the portion $\interfm\eqbydef\{x\in\interf\,:\; \bbeta(x)\cdot\vect{n}_{\interf}<0\}$, where $\vect{n}_{\interf}$ is an (arbitrary) oriented unit normal vector pointing out of the elliptic region. Concretely, $\interfm$ corresponds to a subset of $\interf$ where the advection field flows from the hyperbolic side to the elliptic side. Thus, we set $\interfp\eqbydef\interf\backslash\interfm$. %As a convention, we fix the positive orientation of the unit normal $\vect{n}_{\interf}$ at the interface $\interf$ as a vector pointing out of the elliptic region. 
\end{itemize}
For a given forcing term $f\in L^{2}(\Domain)$, the continuous problem reads:
\begin{equation}
\label{continuous-deg-problem}
\begin{array}{rccll}
\quad\dvg(-\bkappa\grd u+\bbeta u)+\gamma u&=&f &\mbox{ in }& \Omega,\\%\Omega\backslash\interf,\\
%\jump{-\bkappa\grd u+\bbeta u}&=&0 &\mbox{ on }& \interf,\\
\jump{u}&=&\vect{0}&\mbox{ on }& \interfp,\\
u&=&0 &\mbox{ on }&\gammam,
\end{array}
\end{equation}
where $\jump{\cdot}$ denotes the standard DG-jump trace operator as defined in \cite{ABCMUnified}. The well-posedness of the boundary value problem \eqref{continuous-deg-problem} has been analyzed by Ole\u{\i}nik and Radkevi\v{c} in \citep{oleinik1973}. They proved the existence and uniqueness of a weak solution for homogeneous and nonhomogeneous Dirichlet boundary conditions, respectively. The main objective of the present paper is to propose, in a unified formalism, an inspired interior penalty HDG method that can treat all of these abovementioned physical situations in an automated fashion. Before doing so, let us specify the discrete setting concerning mesh assumptions, the definition of trace operators and the approximation spaces that will be used later in the rest of this paper.

\section{Discrete setting}
\label{Discrete_setting}

\subsection{Mesh notations}

Let $h$ be a positive parameter, and assume without loss of generality that $h\leq 1$. We denote by $\Th{ell}$ (resp. $\Th{hyp}$) a conformal partition of the subdomain $\Domaine$ (resp. $\Domainh$) satisfying $\Th{ell}\cap\Th{hyp}=\emptyset$. The set of all mesh elements is denoted by $\Th{}$, i.e., $\Th{}\eqbydef\Th{ell}\cup\Th{hyp}$, where $h$ stands for the largest diameter of all elements. We precisely state here that the sets of mesh elements $\Th{ell}$ and $\Th{hyp}$ can be composed of several types of geometric elements, \ie hybrid meshes. Following our notation, the generic term \textit{interface} indicates a $(d-1)$-dimensional geometric object with a positive measure, i.e., an edge if $d=2$ and a face if $d=3$. The set of boundary interfaces is denoted by $\Fhb$, i.e., $\f{}\in\Fhb$ if there exists $\e{}$ in $\Th{}$ such that $\f{}\eqbydef\bnd{\e{}}\cap\BOmega$. We assume that the set $\Fhb$ coincides with the disjoint boundary partition $\Partedgbpm$, i.e., the boundary interfaces lying entirely in one of the subsets $\Gamma^{\pm}$. Likewise, we denote by $\Fhi$ the set of interior interfaces, i.e., $\f{}\in\Fhi$ if there exists $\e{1}$ and $\e{2}$ in $\Th{}$ such that $\f{}\eqbydef\bnd{\e{1}}\cap\bnd{\e{2}}$. The set of all interfaces is denoted by $\Partedg$, i.e., $\Fh{}\eqbydef\Fhi\cup\Fhb$, and we set $\Fh{\pm}\eqbydef\Fhi\cup\Partedgbpm$. In particular, we denote by $\Ih{}$ the subset of $\Fhi$, which belongs to $\interf$ ($\dinterf\subset\Fhi$), i.e., $\dinterf$ corresponds here to the discrete counterpart of $\interf$. We assume first that for any interface $\f{}\in\Ih{}$ (i) there exist $\e{1}\in\Th{ell}$ and $\e{2}\in\Th{hyp}$ such that $\f{}\eqbydef\bnd{\e{1}}\cap\bnd{\e{2}}$, and (ii) $\f{}$ lies entirely in one of the disjoint subsets $\Ih{\pm}$ corresponding to the discrete counterpart of $\interfpm$, respectively. Moreover, for any mesh element $\e{}\in\Th{}$, we denote by $\FE\eqbydef\{\f{}\in\Fh\,:\,\f{}\subset\bnd{\e{}} \}$ the set of interfaces composing the boundary of $\e{}$, and we set $\eta_{\e{}}\eqbydef\card{\FE}$. For all elements $X$ of $\Th{}$ or $\Fh{}$, we denote by $\abs{X}$ and $h_{X}$ the measure and diameter of $X$, respectively. 

\subsection{Approximation spaces}

For any polyhedral domain $D\subset\R^d$ with $\partial D\subset\R^{d-1}$, we denote by $\inerprod{\cdot}{\cdot}{D}$ (resp. $\dualprod{\cdot}{\cdot}{\partial D}$) the $\LL$-inner product in $\lebesgue{2}{D}$ (resp. $\lebesgue{2}{\partial D}$) equipped with its natural norm $\norm{\cdot}{0}{D}$ (resp. $\norm{\cdot}{0}{\partial D}$). Similarly, we denote by $\hilbert{s}{D}{}$ the usual Hilbert space of index $s$ on $D$ equipped with its natural norm $\norm{\cdot}{s}{D}$ and seminorm $\seminorm{\cdot}{s}{D}$, respectively. In particular, if $s=0$, then we set $\hilbert{0}{D}{}=\lebesgue{2}{D}$. We now denote by $\hilbert{s}{\Th{}}{}$ the usual broken Sobolev space and by $\grdh$ the broken gradient operator acting on $\hilbert{s}{\Th{}}{}$ with $s\geq 1$. Let us now introduce compact notation associated with the discrete $\LL$-inner scalar product:
\begin{equation}
\inerprod{\cdot}{\cdot}{\Th{}}\eqbydef\sumTh\inerprod{\cdot}{\cdot}{\e{}},\quad\dualprod{\cdot}{\cdot}{\bnd{\Th{}}}\eqbydef\sumTh\dualprod{\cdot}{\cdot}{\bnd{\e{}}}\quad\textrm{and}\quad\dualprod{\cdot}{\cdot}{\Fh{}}\eqbydef\sumFh\dualprod{\cdot}{\cdot}{\f{}},
\end{equation}
and we denote by $\norm{\cdot}{0}{\Th{}}$, $\norm{\cdot}{0}{\bnd{\Th{}}}$ and $\norm{\cdot}{0}{\Fh{}}$ its corresponding norms, respectively. As usual in HDG methods, we consider broken Sobolev spaces:
\begin{equation}
    \Pk{k}{\Th{}}\eqbydef \, \{ \dv\in\lebesgue{2}{\Th{}}\, : \, \dv\vert_{\e{}} \in \Pk{k}{\e{}}, \; \forall \e{}\in\Th{}\},
\end{equation}
and similarly for $\Pk{k}{\Fh{}}$. Here, $\Pk{k}{X}$ denotes the space of polynomials of at least degree $k$ on $X$, where $X$ corresponds to a generic element of $\Th{}$ or $\Fh{}$, respectively. For H-IP discretization, two types of discrete variables are necessary to approximate the weak solution $u$ of problem \eqref{continuous-deg-problem}. First, the discrete variable $\du\in\Vh$ is defined within each mesh element, and its trace $\dtu\in\TVh{0}$ is defined on the mesh skeleton with respect to the imposed Dirichlet boundary conditions at the boundary part $\Gamma^{-}$. Thus, we set: %$\Vh\eqbydef\polyn{k}{\Th{}}{}$ and
\begin{equation}
%\begin{align}
\Vh\eqbydef\polyn{k}{\Th{}}{}\quad\textrm{and}\quad\TVh{0}\eqbydef\, \{ \dtv\in\polyn{k}{\Fh{}}{}\, : \, \dtv\vert_{\f{}} = 0,  \; \forall\f{}\in\Fhbm  \}.
%\end{align}
\end{equation}
%where $\proj$ denotes the $\LL$-orthogonal projection onto $\Pk{k}{\f}$. %Throughout the manuscript, we use the following compact notation: Let $\CVh{g}\eqbydef\Vh\times\TVh{g}$ denote the composite approximation space and a generic element of $\CVh{g}$ be denoted by $\cdv\eqbydef(\dv,\dtv)$.
For clarity, we introduce compact discrete variables $\cdu\eqbydef (\du,\dtu)$ and $\cdv\eqbydef (\dv,\dtv)$ belonging to the composite approximation space $\CVh{}\eqbydef\Vh\times\TVh{0}$, i.e., $\cdu,\cdv\in\CVh{}$. % and $\cdv\in\CVh{0}$. 
Finally, let us retain the discrete trace inequalities. For all $\dv\in\Vh{}$, the following holds,%, we obtain the
\begin{equation}
%\begin{align}
    \label{discret_trace_ineq}
    \norm{\dv}{0}{\f{}}\leq\,\C{tr}h^{-\ohalf}_{\e{}}\norm{\dv}{0}{\e{}},\\
%    \label{inverse_trace_ineq}
%    \norm{\grdh{\dv}}{0}{\e{}}\leq\,&\C{inv}h^{-1}_{\e{}}\norm{\dv}{0}{\e{}},
%\end{align}
\end{equation}
where $\C{tr}$ is a positive constant independent of $h_{\e{}}$.

\subsection{Discrete trace operators}

Let $\jump{\cdot}$ denote the standard DG-jump operator as introduced by Brezzi \al in \citep{BMS2004M3AS}. For all $\e{}\in\Th{}$ and $\f{}\in\bnd{\e{}}$, we now define the HDG-jump operator of the composite discrete variable $\cdv\in\CVh{}$ across $\f{}$ as $\tjump{\cdv}_{\e{},\f{}}\eqbydef(\dv\vert_{\f{}}-\dtv\vert_{\f{}})\mathbf{n}_{\e{},\f{}}$, where $\mathbf{n}_{\e{},\f{}}$ denotes the unit normal vector to $\f{}$ pointing out of $\e{}$. To ensure that confusion cannot arise, we voluntary omit the subscripts $\e{}$ and $\f{}$ from the definition, and we simply write $\tjump{\cdv}\eqbydef(\dv-\dtv)\mathbf{n}$. Finally, we define the weighted-average operator denoted $\wmean{\cdot}{\vom}$ and its conjugate $\bmean{\cdot}{\vom}$. For all $\f{}\in\Fhi$ with $\f{}\eqbydef\bnd{\e{1}}\cap\bnd{\e{2}}$ and $v\in\hilbert{s}{\Th{}}{}$ with $s\geq 1$, we set:
\begin{equation}\label{eqn:wmean}	
    \wmean{v}{\vom}\eqbydef\omega_1v_{1}+\omega_2v_{2}\quad\textrm{and}\quad\bmean{v}{\vom}\eqbydef\omega_2v_{1}+\omega_1v_{2},
\end{equation}
where $v_{i}=v\vert_{\e{i},\f{}}$ and $\vom\eqbydef(\omega_1,\omega_2)$ is a double-valued function verifying that weights satisfy $\omega_1+\omega_2=1$. If $\f{}\in\Fhb$, we then assume that $\wmean{v}{\vom}=\bmean{v}{\vom}\eqbydef v$. If $\vom=(1/2,1/2)$, we then recover the classical average operator, and we will omit the subscript $\vom$ in their definitions. These definitions \eqref{eqn:wmean} are also available for any vector-valued function $\vect{v}$.

\section{Hybridizable interior penalty discontinuous Galerkin method}
\label{HIP}

In this section, we describe the primal HDG method for solving the problem \eqref{continuous-deg-problem}. First, we derive it intuitively, and we then propose a compact notation of all of these variants. The consistency and coercivity properties are also discussed in order to ensure the well-posedness of the discrete problem. Finally, we suggest an adaptive strategy for selecting suited penalty parameters with respect to the coercivity requirement for all mentioned regimes. %Finally, we establish essential properties such as consistency and coercivity to ensure the well-posedness of the discrete problem. 

\subsection{Intuitive derivation}
 
The discrete formulation of the continuous model problem \eqref{continuous-deg-problem} can be intuitively derived with respect to the three following steps: 
\begin{itemize}
\item[$\blacktriangleright$] \underline{Global weak formulation}: Let $\cdu,\cdv\in\CVh{}$. For all $\e{}\in\Th{}$, multiplying \eqref{primal-ADR} by a test function $\dv$, and integrating by parts over $\e{}$, we obtain a local equation:
\begin{equation}
\label{locweak}
 -\inerprod{\dsg(\du)}{\grdh\dv}{\e{}}+\dualprod{\dtsg(\cdu)\cdot\vect{n}}{\dv}{\bnd{\e{}}}+\inerprod{\mu\du}{\dv}{\e{}}=\inerprod{f}{\dv}{\e{}},
\end{equation}
where $\dsg(\du)\eqbydef-\bkappa\grdh\du+\bbeta \du$ corresponds to the approximation of the total flux on $\e{}$ and $\dtsg(\cdu)$, its trace approximation on $\bnd{\e{}}$ that we will precisely define below. By summing \eqref{locweak} over all elements $\e{}$ of $\Th{}$, we then obtain the global equation,
\begin{equation}
\label{globweak}
 -\inerprod{\dsg(\du)}{\grdh\dv}{\Th{}}+\dualprod{\dtsg(\cdu)\cdot\vect{n}}{\dv}{\bnd{\Th{}}}+\inerprod{\mu\du}{\dv}{\Th{}}=\inerprod{f}{\dv}{\Th{}}.
\end{equation}
Let us precisely state at this stage that $\dtsg(\cdu)$ can be evaluated independently on both sides of a given interface. Consequently, an additional equation needs to be included in \eqref{globweak} to ensure the continuity requirements.

\item[$\blacktriangleright$] \underline{Transmission conditions}: By considering that $\dtv\in\TVh{0}$ vanishes on $\Fhbm$, we impose the global transmission requirements respecting the outflow boundary condition on $\Fhbp$: 
\begin{equation}
\label{transcond}
	\dualprod{\dtsg(\cdu)\cdot\vect{n}}{\dtv}{\bnd{\Th{}}}=\dualprod{(\bbeta\cdot\vect{n})\dtu}{\dtv}{\Fhbp},
\end{equation}
which is the statement of weak continuity of $\dtsg(\cdu)\cdot\vect{n}$ across the mesh skeleton $\Fh{+}$. Thus, substituting \eqref{transcond} into \eqref{globweak} leads to
\begin{equation}
\label{finprob}
  -\inerprod{\dsg(\du)}{\grdh\dv}{\Th{}}+\inerprod{\mu\du}{\dv}{\Th{}}+\dualprod{\dtsg(\cdu)}{\tjump{\cdv}}{\bnd{\Th{}}}+\dualprod{(\bbeta\cdot\vect{n})\dtu}{\dtv}{\Fhbp}=\inerprod{f}{\dv}{\Th{}}.
\end{equation}
\item[$\blacktriangleright$] \underline{Numerical Flux in the HDG sense:} The discrete problem \eqref{finprob} is then closed by setting $\dtsg(\cdu)$ at the element level. For all $\e{}\in\Th{}$, we assume the following simple form: 
\begin{equation}
\label{tracesg}
\dtsg(\cdu)\eqbydef\dsg(\du)+\tau\tjump{\cdu}\quad\textrm{on}\quad\bnd{\e{}},
\end{equation}
where $\tau$ denotes the stabilization penalty parameter that we describe precisely below. %Here $\tau$ is also assumed to be evaluated independently on both side of a given interface. % taking account the diffusive and advective effects. 
\end{itemize}
Inserting \eqref{tracesg} into \eqref{finprob} leads to the incomplete scheme of the hybridizable interior penalty discontinuous Galerkin (H-IIP) method. Several variations of this methodology can then be derived by controlling the introduction of additional consistent terms to the discrete formulation \eqref{finprob} that we have summarized below.

\subsection{Compact discrete formulation}

 Thus, the compact formulation of the H-IP method consists of seeking $\cdu\in\CVh{}$ such that 
\begin{equation}
\label{HIP-method}
\dbilin{a^{(\epsilon)}_h}{\cdu}{\cdv}=l(\cdv),\qquad\forall \cdv\in\CVh{},
\end{equation} 
where $l(\cdv)\eqbydef\inerprod{f}{\dv}{\Th{}}$ and the bilinear form $a^{(\epsilon)}_h$ can be linearly decomposed following its diffusive, advective-reactive, and stability contributions, respectively:
\begin{equation}
\label{bilinear_decomp}
\dbilin{a^{(\epsilon)}_h}{\cdu}{\cdv}\eqbydef\dbilin{a^{(\epsilon)}_{\bkappa,h}}{\cdu}{\cdv}+\dbilin{a_{\bbeta,\mu,h}}{\cdu}{\cdv}+\dbilin{s_h}{\cdu}{\cdv}.
\end{equation}
Here, $a^{(\epsilon)}_{\bkappa,h}$ corresponds solely to the H-IP discretization of the diffusive part and is given by:
\begin{equation}
\label{diffusive_form}
\dbilin{a^{(\epsilon)}_{\bkappa,h}}{\cdu}{\cdv}\eqbydef\inerprod{\bkappa\grdh\du}{\grdh\dv}{\Th{}}-\dualprod{\bkappa\grdh\du}{\tjump{\cdv}}{\bnd{\Th{}}}-\epsilon\dualprod{\bkappa\grdh\dv}{\tjump{\cdu}}{\bnd{\Th{}}},
\end{equation}
where the parameter $\epsilon\in\{0,\pm 1\}$ controls the introduction of the (consistent) symmetry term $\dualprod{\bkappa\grdh\dv}{\tjump{\cdu}}{\bnd{\Th{}}}$. Similar to the standard IPDG methods, $\epsilon=0$ corresponds to the Incomplete scheme denoted as the H-IIP method as described above, while $\epsilon=+1$ (resp. $\epsilon=-1$) denotes the Symmetric (resp. Nonsymmetric) scheme denoted as the H-SIP (resp. H-NIP) method. Thus, the advective-reactive part is discretized as follows:
\begin{equation}
\label{advective_form}
\dbilin{a_{\bbeta,\mu,h}}{\cdu}{\cdv}\eqbydef-\inerprod{\bbeta\du}{\grdh\dv}{\Th{}}+\inerprod{\mu\du}{\dv}{\Th{}}+\dualprod{\bbeta\du}{\tjump{\cdv}}{\bnd{\Th{}}}+\dualprod{(\bbeta\cdot\vect{n})\dtu}{\dtv}{\Fhbp}.
\end{equation}
The last quantity $s_h$ in \eqref{bilinear_decomp} is called the discrete stability form based on jump-penalty terms and is given by:
\begin{equation}
    \label{stability_form}
    \dbilin{s_h}{\cdu}{\cdv}\eqbydef\dualprod{\tau\tjump{\cdu}}{\tjump{\cdv}}{\bnd{\Th{}}},
\end{equation}
where $\tau$ denotes the total stabilization function accounting for both the diffusive and advective normal effects denoted by $\tau_{\bkappa}$ and $\tau_{\bbeta}$, respectively; \ie $\tau\eqbydef\tau(\tau_{\bkappa},\tau_{\bbeta})$. Concretely, $\tau_{\bkappa}$ (resp. $\tau_{\bbeta}$) corresponds to an arbitrary given definition of the penalty parameter in the context of a pure-diffusive (resp. -advective) regime. Let us also precisely state that $\tau_{\bkappa}$ and $\tau_{\bbeta}$, and hence $\tau$ can be distinctively evaluated on both sides of an interface of the mesh skeleton. Different formulations have been proposed in the literature, and its definition is concretely prescribed regarding the predominant normal effect. In what follows, we shall assume some properties and minimal requirements concerning the function $\tau$:%\newpage
\begin{proposition}[]
\label{mini_requirement}
The stabilization function $\tau:\R\times\R\to\R$ \eqref{stability_form} is chosen such that: 
\begin{enumerate}
    \item %We set that $\tau$ is a given function verifying that, 
    For all $s,t\in\R$, the following holds:
    \begin{equation}
        \label{first_second_assump}
        s\eqbydef\tau(s,0)\quad\textrm{and}\quad t\eqbydef\tau(0,t).%\quad\textrm{and}\quad \tau(s,t)\geq \abs{s}.
    \end{equation}
    %Here $\tau_{\bkappa}$ and $\tau_{\bbeta}$ correspond to the diffusive and advective stabilization penalty parameter, respectively. 
    The first assumption \eqref{first_second_assump} allows for an encompassed treatment of both (extremal) configurations, \ie the pure-diffusive and pure-advective regimes characterized by $\tau_{\bbeta}\eqbydef 0$ and $\tau_{\bkappa}\eqbydef 0$, respectively.
    \item We shall assume the existence of a constant $\tau_0>0$ such that, for all $s,t\in\R$, then
    \begin{equation}
        \label{minimal_tau}
        \min(\tau(s,t)-\tau(s,0)+\dfrac{\bbeta\cdot\vect{n}}{2})\vert_{\bnd{\Th{}}}\geq\tau_0
    \end{equation}
    The second assumption ensures the coercivity and hence the well-posedness of the discrete problem \eqref{HIP-method} (see Lemma \ref{coercivity}).% define the natural energy-norm equipping $\CVh{}$, and to prove that $a^{(\epsilon)}_h$ is $\CVh{}$-coercive.
\end{enumerate}
\end{proposition}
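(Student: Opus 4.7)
The proposition fixes structural requirements on the as-yet-unspecified function $\tau$, so my plan is not to deduce it in isolation but to exhibit explicit forms for $\tau$ that cover the three regimes targeted by the paper (pure-diffusive, pure-advective, and locally-degenerate) and then verify both conditions directly. The exposition would split into two steps: a generic decomposition of $\tau$ that renders condition~\eqref{first_second_assump} transparent, followed by a pointwise lower-bound analysis on $\bnd{\Th{}}$ for condition~\eqref{minimal_tau}.

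For condition~\eqref{first_second_assump}, I would adopt the additive ansatz
\begin{equation*}
\tau(s,t)\;\eqbydef\;s+t,
\end{equation*}
so that $\tau(s,0)=s=\tau_{\bkappa}$ and $\tau(0,t)=t=\tau_{\bbeta}$ hold by inspection and the full stabilization collapses to the correct one-sided penalty whenever either mechanism is absent. The key quantity appearing in condition~\eqref{minimal_tau} then simplifies to $\tau(s,t)-\tau(s,0)=\tau_{\bbeta}$, decoupling the lower-bound analysis from the diffusive penalty $\tau_{\bkappa}$ entirely.

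For condition~\eqref{minimal_tau}, I would treat the hyperbolic skeleton with the $\theta$-upwind choice $\tau_{\bbeta}\eqbydef\theta|\bbeta\cdot\vect{n}|+\tau_0$ for some $\theta\geq 0$ and $\tau_0>0$. A direct sign argument then gives
\begin{equation*}
\tau_{\bbeta}+\tfrac{1}{2}\,\bbeta\cdot\vect{n}\;\geq\;\tfrac{1}{2}|\bbeta\cdot\vect{n}|+\tau_0\;\geq\;\tau_0\quad\textrm{on}\quad\bnd{\Th{}},
\end{equation*}
which secures \eqref{minimal_tau} in the hyperbolic subregion. In the elliptic subregion the Scharfetter--Gummel choice replaces $\tau_{\bbeta}$ by Bernoulli-type weights $B(\pm x)$ evaluated at the local P\'eclet argument; here I would exploit the positivity identity $B(x)+x/2=B(-x)-x/2>0$ together with the fact that the continuous map $x\mapsto B(x)+x/2$ has a positive infimum on $\R$ to recover the same lower bound after rescaling by the diffusive factor.

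The main obstacle is the Scharfetter--Gummel verification: unlike the upwind case, where the sign of $\bbeta\cdot\vect{n}$ drives the inequality, the SG weights blend diffusion and advection through the Bernoulli function, so extracting a uniform constant $\tau_0$ requires careful control of $B$ on both tails and near the transition $\bbeta\cdot\vect{n}=0$. A secondary delicate point is the mesh interface $\dinterf$: one must check that the elliptic and hyperbolic choices of $\tau$ match consistently on shared faces so that the single coercivity constant $\tau_0$ stays uniform across the regime change and the hypothesis remains usable inside the coercivity proof of Lemma~\ref{coercivity}.
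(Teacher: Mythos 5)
Note first that the paper offers no proof of Proposition \ref{mini_requirement}: it is introduced with the words ``we shall assume some properties and minimal requirements concerning the function $\tau$,'' so it is a hypothesis restricting the admissible stabilizations, not a theorem. What you have written is therefore a verification that the hypothesis is satisfiable rather than a proof of it, and in that role it tracks the paper's own later development almost exactly: your additive ansatz $\tau(s,t)\eqbydef s+t$ is the paper's Additive scheme $\abs{\mathcal{A}}_{\textrm{add}}$, your $\theta$-upwind computation reproduces Definition \ref{up_tau}, and your Bernoulli identity $\mathcal{B}(x)+x/2=\mathcal{B}(-x)-x/2\geq 1$ is precisely the mechanism behind requirement (R3) and the paper's bound $1+\abs{s}/2\leq\mathcal{B}(-\abs{s})$ for the Scharfetter--Gummel scheme. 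So the substance is the same; you have merely relocated the paper's Section on the adaptive stabilization strategy into a ``proof'' of the Proposition.

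There is, however, one point where you silently diverge from the paper, and it touches a genuine weak spot. With the paper's own choice $\tau^{\theta}_{\bbeta}\eqbydef\theta\abs{\bbeta\cdot\vect{n}}$, the quantity $\tau_{\bbeta}+\frac{1}{2}\bbeta\cdot\vect{n}\geq(\theta-\frac{1}{2})\abs{\bbeta\cdot\vect{n}}$ vanishes wherever $\bbeta\cdot\vect{n}=0$ on the skeleton, so the uniform constant $\tau_0>0$ demanded in \eqref{minimal_tau} does not exist there; the same is true of the SG scheme, whose requirement (R3) only yields $\tau(s,t)-\tau(s,0)+\frac{1}{2}\bbeta\cdot\vect{n}\geq 0$. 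Your shifted definition $\tau_{\bbeta}\eqbydef\theta\abs{\bbeta\cdot\vect{n}}+\tau_0$ repairs this, but you should state explicitly that it is a modification of Definition \ref{up_tau} (or, alternatively, that \eqref{minimal_tau} must be read with a nonstrict bound, weakening the jump seminorm in the energy norm \eqref{nrj_norm}); as written, your ``direct sign argument'' quietly assumes a strictly positive lower bound that the paper's definitions do not supply. Your closing concern about consistency of the two choices of $\tau$ across the interface $\dinterf$ is legitimate, but it is a gap you flag without closing, and the paper does not close it either.
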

%such as the Upwind-based scheme for pure advective problems or the Scharfetter-Gummel scheme for advection-diffusion problems.   that we describe below. Here, we assume that $\tau\eqbydef \tau(\tau_{\bkappa},\tau_{\bbeta})$ where $\tau:\R\times\R\rightarrow\R$ is a given function that we will precise below, and  In practice, $\tau$ is chosen such that the following minimal requirements hold, \ie for all $s,t\in\R$ then, 

\begin{definition}[Diffusive penalty parameter]
\label{def_diffusive_penalty}
For all $\e{}\in\Th{}$ and $\f{}\in\FE$, we assume that the diffusive parameter $\tau_{\bkappa}\vert_{\e{},\f{}}$ has the following form, %we shall assume that the However, we shall assume the following definition of the former coefficient in the rest of the document; \ie %$\bkappa$ is a symmetric positive definite matrix-valued function. We then assume that the corresponding diffusive penalty parameter $\tau_{\bkappa_{\e{},\f}}$ is non-null and is given by:
\begin{equation}
\label{diff_penalty}
\tau_{\bkappa}\vert_{\e{},\f{}}\eqbydef\alpha_0C^2_{\mathrm{tr}}\dfrac{\kappa_{\e{},\f{}}}{h_{\e{}}}\quad\textrm{on}\quad\f{}\in\bnd{\e{}},
\end{equation}
where $\kappa_{\e{},\f{}}\eqbydef\vect{n}_{\e{},\f{}}\bkappa_{\e{}}\vect{n}_{\e{},\f{}}$ corresponds to the normal diffusivity, $\alpha_0>0$ is a user-dependent parameter, and $C_{\mathrm{tr}}$ is the constant of the discrete trace inequality \eqref{discret_trace_ineq}.
\end{definition}
%Furthermore, assuming \eqref{spsd}, we then deduce that for all $\e{}\in\Th{}$ and $\f{}\in\bnd{\e{}}$,
%\begin{equation}
%    0\leq\tau_{\bkappa_{\e{},\f{}}}\leq\tau_{\bar{\kappa}_{\e{}}}\quad\textrm{where}\quad\tau_{\bar{\kappa}_{\e{}}}\eqbydef\alpha_0C^2_{\mathrm{tr}}\dfrac{\bar{\kappa}}{h_{\e{}}}.
%\end{equation}

%For intermediate configurations, we then favor a Sharfetter-Gummel strategy to control the (excessive) amount of numerical diffusion. These procedures will be precisely described below.

\begin{remark}[Static condensation]
\label{remark2}
%Let us now recall the well-known technique of static condensation. 
As mentioned in \citep{Cockburn2016} (see, \eg \citep{CDEHHO}), the technique of static condensation was introduced to reduce the size of the discrete matrix associated with the global problem \eqref{HIP-method}. Indeed, let $\boldsymbol{\mathrm{U}}_h\eqbydef[\mathrm{U}_h,\hat{\mathrm{U}}_h]$ denote the vector of DOFs of the composite variable $\cdu\in\CVh{}$, which is composed of interior- and interface-based DOFs denoted by $\mathrm{U}_h$ and $\hat{\mathrm{U}}_h$, respectively. The strategy consists of eliminating interior-based unknowns from the above equations by successively projecting \eqref{HIP-method} on $(\dv,0)$ and $(0,\dtv)$. We thus obtain 
\begin{equation}
    \label{static_cond}
    \begin{array}{rcc}
        \dbilin{a^{(\epsilon)}_h}{\cdu}{(\dv,0)}&:&\mathrm{A}_{uu}\mathrm{U}_h+\mathrm{A}_{u\hat{u}}\hat{\mathrm{U}}_h=\mathrm{F},\\
        \dbilin{a^{(\epsilon)}_h}{\cdu}{(0,\dtv)}&:&\mathrm{A}_{\hat{u}u}\mathrm{U}_h+\mathrm{A}_{\hat{u}\hat{u}}\hat{\mathrm{U}}_h=\mathrm{0}.
    \end{array}
\end{equation}
Due to the discontinuous nature of $\Vh$, all computations can be performed cellwise, leading to a block-diagonal matrix $\mathrm{A}_{uu}$, which can be easily inverted and eliminated. Finally, we obtain the linear system:
\begin{equation}
    \label{schur_complement}
    [\mathrm{A}_{\hat{u}\hat{u}}-\mathrm{A}_{\hat{u}u}\mathrm{A}^{-1}_{uu}\mathrm{A}_{u\hat{u}}]\hat{\mathrm{U}}_h=\mathrm{G}.
\end{equation}
The matrix on the left-hand-side of \eqref{schur_complement} is called the Shur complement of $\mathrm{A}_{uu}$. Let us  precisely state that the original discrete problem \eqref{HIP-method} and its reduced version \eqref{schur_complement} are globally nonsymmetric since the continuous problem \eqref{static_cond} is itself nonsymmetric: this is due to the advective term.
\end{remark}

%\newpage
\subsection{Stability analysis}
We now must check some favorable properties such as the consistency and stability of all $\epsilon$-variants of the H-IP method to ensure the existence and uniqueness of a discrete solution. 
\subsubsection{Consistency}
\begin{lemma}[Consistency]
Let $u$ be the exact solution of the problem \eqref{continuous-deg-problem} and $\tu$ denote its trace on the mesh skeleton. By setting $\boldsymbol{u}\eqbydef(u,\tu)$, the following holds
    \begin{equation}
        a^{(\epsilon)}_h(\boldsymbol{u},\cdv)=l(\cdv),\qquad\forall \Dv\in\CVh{},
    \end{equation}
for any value of the parameter $\epsilon=\{0,\pm 1\}$.
\end{lemma}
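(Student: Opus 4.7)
The plan is to rewrite $a_h^{(\epsilon)}(\boldsymbol{u},\cdv) - l(\cdv)$ as a sum of face contributions that vanish thanks to (i) the definition of $\tu$ as the trace of $u$ and (ii) the strong/transmission form of \eqref{continuous-deg-problem}. A convenient first observation is that, by construction, $\tjump{\boldsymbol{u}}\vert_{\e{},\f{}} = (u\vert_{\e{},\f{}} - \tu\vert_{\f{}})\vect{n}_{\e{},\f{}}$ vanishes on every element boundary. This instantly kills the stability term $s_h(\boldsymbol{u},\cdv)$ in \eqref{stability_form} and the symmetry correction $\epsilon\dualprod{\bkappa\grdh\dv}{\tjump{\boldsymbol{u}}}{\bnd{\Th{}}}$ appearing in \eqref{diffusive_form}, which already explains why the conclusion is independent of the choice $\epsilon\in\{0,\pm 1\}$.

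Next, since $u$ solves \eqref{primal-ADR} pointwise in each $\e{}\in\Th{}$, I would multiply by $\dv$ and integrate by parts elementwise to obtain
\begin{equation*}
\inerprod{\bkappa\grd u}{\grdh\dv}{\e{}} - \inerprod{\bbeta u}{\grdh\dv}{\e{}} + \inerprod{\mu u}{\dv}{\e{}} + \dualprod{\dsg(u)\cdot\vect{n}}{\dv}{\bnd{\e{}}} = \inerprod{f}{\dv}{\e{}},
\end{equation*}
with $\dsg(u) \eqbydef -\bkappa\grd u + \bbeta u$. Summing over all elements yields a global identity whose only skeleton contribution is $\dualprod{\dsg(u)\cdot\vect{n}}{\dv}{\bnd{\Th{}}}$. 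In parallel, expanding the face terms of $a_{\bkappa,h}^{(\epsilon)}(\boldsymbol{u},\cdv)$ and $a_{\bbeta,\mu,h}(\boldsymbol{u},\cdv)$ by means of $\tjump{\cdv} = (\dv-\dtv)\vect{n}$ and regrouping produces
\begin{equation*}
\dualprod{\dsg(u)\cdot\vect{n}}{\dv}{\bnd{\Th{}}} - \dualprod{\dsg(u)\cdot\vect{n}}{\dtv}{\bnd{\Th{}}} + \dualprod{(\bbeta\cdot\vect{n})\tu}{\dtv}{\Fhbp},
\end{equation*}
so that, together with the volumetric identity above, one gets $a_h^{(\epsilon)}(\boldsymbol{u},\cdv) = l(\cdv)$ plus a residual that depends only on the hybrid test function $\dtv$.

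It will then remain to show that this residual vanishes. Since $\dtv$ is single-valued on every face of $\Fh{}$ and vanishes on $\Fhbm$, interior contributions in $\dualprod{\dsg(u)\cdot\vect{n}}{\dtv}{\bnd{\Th{}}}$ collapse into $\jump{\dsg(u)\cdot\vect{n}}\,\dtv$ integrated over $\Fhi$, which is zero because $u$ satisfies \eqref{continuous-deg-problem} in the distributional sense (conservation of the normal total flux). On the boundary faces in $\Fhbp$, the Ole\u{\i}nik--Radkevi\v{c} definition \eqref{Fichera} forces $\bkappa\vect{n} = \mathbf{0}$, whence $\dsg(u)\cdot\vect{n} = (\bbeta\cdot\vect{n})u = (\bbeta\cdot\vect{n})\tu$ and the two remaining surface integrals cancel.

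The main obstacle will be the rigorous treatment of the discrete elliptic--hyperbolic interface $\dinterf$, and particularly of $\Ih{-}$, where the exact solution can be discontinuous. There the symbol $\tu$ has to be read in an upwind sense, and the flux-continuity step $\jump{\dsg(u)\cdot\vect{n}}=0$ must be justified from the transmission condition $\jump{u}=\vect{0}$ which \eqref{continuous-deg-problem} imposes only on $\interfp$, combined with the degeneracy $\bkappa=\vect{0}$ on the hyperbolic side. Once this delicate localization is settled, the four ingredients above combine into the stated identity $a_h^{(\epsilon)}(\boldsymbol{u},\cdv) = l(\cdv)$ for every $\cdv\in\CVh{}$ and every $\epsilon\in\{0,\pm 1\}$.
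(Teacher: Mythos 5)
Your proposal is correct and follows essentially the same route as the paper's proof: the vanishing of the HDG jump $\tjump{\boldsymbol{u}}$ removes the stability and $\epsilon$-dependent terms, elementwise integration by parts against $\dv$ recovers $l(\cdv)$ from the strong form of \eqref{primal-ADR}, and the residual skeleton terms in $\dtv$ vanish by normal-flux continuity together with the degeneracy $\bkappa\vect{n}=\vect{0}$ on $\Fhbp$ (the paper packages this last step as the transmission condition tested with $(0,\dtv)$). Your explicit flagging of the delicate localization at $\Ih{-}$ is a point the paper's proof passes over silently, but it does not change the argument.
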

\begin{proof}
The regularity of $\vect{u}$ implies that its jump (in the HDG sense) is null on $\bnd{\Th{}}$, \ie for all $\e{}\in\Th{}$ and $\f{}\in\FE$, and then $\tjump{\vect{u}}\eqbydef 0$, since $u$ is a single-valued field on $\f{}\in\FE$. Thus, by setting $\cdv\eqbydef(\dv,0)$, integrating by parts on each element of the mesh, and finally observing that $\jump{-\bkappa\grdh u+ \bbeta u}\eqbydef 0$ on internal interfaces, we immediately have
\begin{equation}
\label{local_consist}
 \dbilin{a^{(\epsilon)}_h}{\vect{u}}{(\dv,0)}\eqbydef\sumTh\dbilin{}{\dvgh(-\bkappa\grdh u+ \bbeta u)+\mu u}{\dv}=\sumTh\dbilin{}{f}{\dv}.
\end{equation} 
Considering now that $\cdv\eqbydef(0,\dtv)\in\CVh{}$, we then obtain
\begin{equation}
    \label{trans_consist}
    \dbilin{a^{(\epsilon)}_h}{\vect{u}}{(0,\dtv)}\eqbydef-\sumTh\lspb{(-\bkappa\grdh u+\bbeta u)\cdot\vect{n}}{\dtv}+\lspabp{(\bbeta\cdot\vect{n})u}{\dtv}=0,
\end{equation}
which corresponds to the (imposed) transmission conditions. The proof is then completed by summing \eqref{local_consist} and \eqref{trans_consist}.
\end{proof}

\subsubsection{Coercivity}
\label{coercivity}

Let us now introduce the natural energy-norm $\tnorm{\cdot}$ equipping $\CVh{}$. For all $\cdv\in\CVh{}$, it is thus given by
\begin{equation}
    \label{nrj_norm}
    \tnorm{\cdv}^2\eqbydef\norm{\bkappa^{\ohalf}\grdh{\dv}}{0}{\Th{}}^2+\norm{\mu^{\ohalf}_0\dv}{0}{\Th{}}^2+\norm{\beta^{\ohalf}\dtv}{0}{\Fhbp}^2+\hdgseminorm{\cdv}{\tau_{0}}^2,
\end{equation}
which clearly depends on constitutive coefficients $\bkappa$, $\beta\eqbydef\bbeta\cdot\vect{n}$, and parameters $\mu_{0}$ and $\tau_0$ as given in \eqref{coercivity-cond} and \eqref{minimal_tau}, respectively. Here, $\hdgseminorm{\cdot}{\gamma}$ corresponds to the HDG-jump seminorm, which is defined as follows:% for all $\cdv\in\CVh{}$ then
\begin{equation}
    \hdgseminorm{\cdv}{\gamma}^2\eqbydef\sum_{\e{}\in\Th{}}\seminorm{\cdv}{\gamma}{\bnd{\e{}}}^2\quad\textrm{with}\quad\seminorm{\cdv}{\gamma}{\bnd{\e{}}}^2\eqbydef\sum_{\f{}\in\FE}\norm{\gamma^{\ohalf}_{\e{},\f{}}\tjump{\cdv}}{0}{\f{}}^2,
\end{equation}
where $\gamma_{\e{},\f{}}\geq 0$ is an arbitrary positive constant associated with $\f{}\in\FE$. The norm \eqref{nrj_norm} is well-defined owing to assumptions \eqref{coercivity-cond} and \eqref{minimal_tau}. To prove the coercivity, let us now introduce the following intermediate result, which is a minor adaptation to the $\bkappa$-tensor field of the proof given by Wells in \citep{wells2011analysis}:
\begin{lemma}[A bound on consistency term]
\label{lemma_bound}
Assuming \eqref{diff_penalty}, for any $\xi>0$, the following holds,
\begin{equation}
    \abs{\dualprod{\bkappa\grdh\dv}{\tjump{\cdv}}{\bnd{\Th{}}}}\leq\dfrac{\xi\eta_0}{2\alpha_0}\norm{\bkappa^{\ohalf}\grdh\dv}{0}{\Th{}}^2+\dfrac{1}{2\xi}\hdgseminorm{\cdv}{\tau_{\bkappa}}^2,
\end{equation}
where $\eta_{0}\eqbydef\underset{\forall\e{}\in\Th{}}{\max}(\eta_{\e{}})$.% and $\eta_{\e{}}$ denotes the number of interfaces composing  $\bnd{\e{}}$.
\end{lemma}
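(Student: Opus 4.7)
The plan is to estimate the boundary integral face-by-face using Cauchy--Schwarz and Young's inequality with the weight $\tau_{\bkappa}$, then absorb the resulting flux norm using the discrete trace inequality \eqref{discret_trace_ineq} together with the specific form of $\tau_{\bkappa}$ given in \eqref{diff_penalty}. The key observation is that the exponent of $h_{\e{}}$ appearing in the definition of $\tau_{\bkappa}$ is exactly what is needed to cancel the $h_{\e{}}^{-1}$ factor produced by the trace inequality.

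First, I would split the sum as $\dualprod{\bkappa\grdh\dv}{\tjump{\cdv}}{\bnd{\Th{}}} = \sum_{\e{}\in\Th{}}\sum_{\f{}\in\FE}\dualprod{(\bkappa\grdh\dv)\cdot\vect{n}}{\dv-\dtv}{\f{}}$ and insert $\tau_{\bkappa}^{\ohalf}$ and $\tau_{\bkappa}^{-\ohalf}$ on the two factors. Cauchy--Schwarz on $\f{}$ followed by Young's inequality with parameter $\xi>0$ gives, for each face,
\begin{equation*}
\abs{\dualprod{(\bkappa\grdh\dv)\cdot\vect{n}}{\dv-\dtv}{\f{}}} \leq \frac{\xi}{2}\,\norm{\tau_{\bkappa}^{-\ohalf}(\bkappa\grdh\dv)\cdot\vect{n}}{0}{\f{}}^2 + \frac{1}{2\xi}\,\norm{\tau_{\bkappa}^{\ohalf}\tjump{\cdv}}{0}{\f{}}^2.
\end{equation*}
The second term directly assembles into $\tfrac{1}{2\xi}\hdgseminorm{\cdv}{\tau_{\bkappa}}^2$ after summation, so the whole work concentrates on the first term.

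Next, using the pointwise Cauchy--Schwarz identity $|(\bkappa\grdh\dv)\cdot\vect{n}|^2 = |(\bkappa^{\ohalf}\grdh\dv)\cdot(\bkappa^{\ohalf}\vect{n})|^2 \leq \kappa_{\e{},\f{}}\,|\bkappa^{\ohalf}\grdh\dv|^2$, together with the definition $\tau_{\bkappa}\vert_{\e{},\f{}} = \alpha_0 C_{\mathrm{tr}}^2\kappa_{\e{},\f{}}/h_{\e{}}$ from \eqref{diff_penalty}, I obtain
\begin{equation*}
\norm{\tau_{\bkappa}^{-\ohalf}(\bkappa\grdh\dv)\cdot\vect{n}}{0}{\f{}}^2 \leq \frac{h_{\e{}}}{\alpha_0 C_{\mathrm{tr}}^2}\,\norm{\bkappa^{\ohalf}\grdh\dv}{0}{\f{}}^2.
\end{equation*}
Applying the discrete trace inequality \eqref{discret_trace_ineq} to the polynomial $\bkappa^{\ohalf}\grdh\dv$ removes precisely the factor $h_{\e{}}/C_{\mathrm{tr}}^2$ and yields $\tfrac{1}{\alpha_0}\norm{\bkappa^{\ohalf}\grdh\dv}{0}{\e{}}^2$, a face-independent bound.

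Finally, summing over $\f{}\in\FE$ produces a factor $\eta_{\e{}}$, and summing over $\e{}\in\Th{}$ produces at most $\eta_0 = \max_{\e{}}\eta_{\e{}}$ in front of $\norm{\bkappa^{\ohalf}\grdh\dv}{0}{\Th{}}^2$. Combining both contributions gives the stated inequality. The only subtle step is the pointwise estimate that expresses $(\bkappa\grdh\dv)\cdot\vect{n}$ in terms of the normal diffusivity $\kappa_{\e{},\f{}}$; everything else is a careful accounting of constants. No geometric regularity assumption beyond the bound $\eta_{\e{}}\leq \eta_0$ is required, and the argument is independent of the symmetry parameter $\epsilon$, so the same estimate will be available for all three H-IP variants when invoked later in the coercivity proof.
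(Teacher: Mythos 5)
Your proof is correct and follows essentially the same route as the paper: weighted Cauchy--Schwarz, the discrete trace inequality combined with the explicit form of $\tau_{\bkappa}$ in \eqref{diff_penalty} to cancel the $h_{\e{}}^{-1}$, Young's inequality with parameter $\xi$, and the count $\eta_{\e{}}\leq\eta_0$ over faces. The only difference is cosmetic ordering (you apply Young's inequality face-by-face before the trace inequality, the paper does it elementwise afterwards), and your pointwise bound $|(\bkappa\grdh\dv)\cdot\vect{n}|^2\leq\kappa_{\e{},\f{}}|\bkappa^{\ohalf}\grdh\dv|^2$ is exactly the step the paper encodes by writing the pairing as $\dualprod{\bkappa^{\ohalf}\grdh\dv}{\bkappa^{\ohalf}\tjump{\cdv}}{\bnd{\e{}}}$.
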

\begin{proof}
The decomposition of the consistency term yields
\begin{equation}
    \abs{\dualprod{\bkappa\grdh\dv}{\tjump{\cdv}}{\bnd{\Th{}}}}\leq\sumTh\abs{\dualprod{\bkappa^{\ohalf}\grdh\dv}{\bkappa^{\ohalf}\tjump{\cdv}}{\bnd{\e{}}}}.
\end{equation}
Successively applying the Cauchy--Schwarz inequality and the discrete trace inequality \eqref{discret_trace_ineq}, and using the definition of $\tau_{\bkappa}$ given in \eqref{diff_penalty}, we infer that
\begin{eqnarray}
    \abs{\dualprod{\bkappa^{\ohalf}\grdh\dv}{\bkappa^{\ohalf}\tjump{\cdv}}{\bnd{\e{}}}}&\leq&\bigg[\dfrac{h_{\e{}}}{\alpha_0C_{\mathrm{tr}}^2}\bigg]^{\ohalf}\norm{\bkappa^{\ohalf}\grdh\dv}{0}{\bnd{\e{}}}\norm{\tau_{\bkappa}^{\ohalf}\tjump{\cdv}}{0}{\bnd{\e{}}},\\
    &\leq&\bigg[\dfrac{\eta_{\e{}}}{\alpha_{0}}\bigg]^{\ohalf}\norm{\bkappa^{\ohalf}\grdh\dv}{0}{\e{}}\seminorm{\cdv}{\tau_{\bkappa}}{\bnd{\e{}}},
\end{eqnarray}
The proof is then completed by applying Young's inequality for any $\xi>0$ and summing over all mesh elements.
\end{proof}
\begin{lemma}[Coercivity]
For any value of the parameter $\epsilon\eqbydef\{0,\pm 1\}$, there exists a positive constant $C$ independent of $h$ such that 
    \begin{equation}
        \dbilin{a^{(\epsilon)}_h}{\cdv}{\cdv}\geq C\tnorm{\cdv}^2.
        %\tau>\dfrac{\abs{\bbeta\cdot\vect{n}}}{2}\quad\textrm{on}\quad\bnd{\Th{}},
    \end{equation}
%where $C\eqbydef\min(1-,1-)$ %then the discrete solution $\cdu\in\CVh{}$ of \eqref{HIP-method} exists and is unique.
\end{lemma}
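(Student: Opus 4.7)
The strategy is to apply the decomposition \eqref{bilinear_decomp} and to build the four pieces of the energy norm $\tnorm{\cdv}^2$ separately from the three contributions $a^{(\epsilon)}_{\bkappa,h}$, $a_{\bbeta,\mu,h}$, and $s_h$ tested against $\cdv$. The advective-reactive form supplies the reaction volume term and the outflow-boundary term, the stability form supplies the jump seminorm, and the diffusive form supplies the broken gradient term modulo absorption of the consistency residual via Lemma \ref{lemma_bound}.

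First I would handle $a_{\bbeta,\mu,h}(\cdv,\cdv)$. Rewriting the convective volume integral as $-\inerprod{\bbeta\dv}{\grdh\dv}{\Th{}} = -\tfrac{1}{2}\inerprod{\bbeta}{\grdh(\dv^2)}{\Th{}}$ and integrating by parts elementwise yields $\tfrac{1}{2}\inerprod{\dvg\bbeta}{\dv^2}{\Th{}} - \tfrac{1}{2}\dualprod{\beta\dv^2}{1}{\bnd{\Th{}}}$, with $\beta\eqbydef\bbeta\cdot\vect{n}$. Combined with $\inerprod{\mu\dv}{\dv}{\Th{}}$, the coercivity assumption \eqref{coercivity-cond} delivers the volumetric lower bound $\norm{\mu^{\ohalf}_0\dv}{0}{\Th{}}^2$. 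For the boundary pieces, I would expand $\dualprod{\bbeta\dv}{\tjump{\cdv}}{\bnd{\Th{}}} = \dualprod{\beta\dv(\dv-\dtv)}{1}{\bnd{\Th{}}}$ and complete the square so that the three boundary contributions recombine as $\tfrac{1}{2}\dualprod{\beta(\dv-\dtv)^2}{1}{\bnd{\Th{}}} - \tfrac{1}{2}\dualprod{\beta\dtv^2}{1}{\bnd{\Th{}}} + \dualprod{\beta\dtv^2}{1}{\Fhbp}$. Single-valuedness of $\dtv$ on the skeleton forces the $\dtv^2$ integrals to cancel pairwise across each interior interface (opposite normals) and to vanish on $\Fhbm$, so that only $\tfrac{1}{2}\norm{\beta^{\ohalf}\dtv}{0}{\Fhbp}^2$ survives from the last two, producing the third piece of $\tnorm{\cdv}^2$.

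Combining the residual jump term $\tfrac{1}{2}\dualprod{\beta(\dv-\dtv)^2}{1}{\bnd{\Th{}}}$ with $\dbilin{s_h}{\cdv}{\cdv}=\dualprod{\tau(\dv-\dtv)^2}{1}{\bnd{\Th{}}}$ produces $\dualprod{(\tau+\beta/2)(\dv-\dtv)^2}{1}{\bnd{\Th{}}}$. Using $\tau(\tau_\bkappa,0)=\tau_\bkappa$ from Proposition \ref{mini_requirement} together with the minimal requirement \eqref{minimal_tau} gives $\tau+\beta/2\geq\tau_0+\tau_\bkappa$ pointwise on $\bnd{\Th{}}$, hence a lower bound $\hdgseminorm{\cdv}{\tau_0}^2+\hdgseminorm{\cdv}{\tau_\bkappa}^2$; the extra $\hdgseminorm{\cdv}{\tau_\bkappa}^2$ is deliberately kept in reserve. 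For the diffusive form, testing against $\cdv$ gives $\norm{\bkappa^{\ohalf}\grdh\dv}{0}{\Th{}}^2-(1+\epsilon)\dualprod{\bkappa\grdh\dv}{\tjump{\cdv}}{\bnd{\Th{}}}$, the consistency piece vanishing outright when $\epsilon=-1$. For $\epsilon\in\{0,1\}$, I would invoke Lemma \ref{lemma_bound} with a Young parameter $\xi>0$ to bound it by $\tfrac{(1+\epsilon)\xi\eta_0}{2\alpha_0}\norm{\bkappa^{\ohalf}\grdh\dv}{0}{\Th{}}^2+\tfrac{1+\epsilon}{2\xi}\hdgseminorm{\cdv}{\tau_\bkappa}^2$ and soak its jump part into the reserved seminorm.

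The only genuinely delicate step, and thus the main obstacle, is parameter tuning. One picks $\xi>(1+\epsilon)/2$ so that the net coefficient $1-\tfrac{1+\epsilon}{2\xi}$ of $\hdgseminorm{\cdv}{\tau_\bkappa}^2$ stays nonnegative, then enforces $\alpha_0>\tfrac{(1+\epsilon)\xi\eta_0}{2}$ so that the coefficient $1-\tfrac{(1+\epsilon)\xi\eta_0}{2\alpha_0}$ multiplying $\norm{\bkappa^{\ohalf}\grdh\dv}{0}{\Th{}}^2$ remains strictly positive. The symmetric case $\epsilon=1$ is the tightest since $1+\epsilon=2$ leaves the least margin, whereas the nonsymmetric case $\epsilon=-1$ imposes no such threshold. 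Taking $C$ as the minimum of the resulting positive coefficients (all independent of $h$) closes the coercivity estimate.
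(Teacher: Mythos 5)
Your proof follows essentially the same route as the paper's: set $\cdu=\cdv$, integrate the advective form by parts to extract the reaction and outflow terms, reduce the boundary remainder to $\tfrac{1}{2}\dualprod{\beta\tjump{\cdv}}{\tjump{\cdv}}{\bnd{\Th{}}}$ via single-valuedness of $\dtv$ and $\beta$, merge that with $s_h$ using \eqref{minimal_tau}, and absorb the diffusive consistency term through Lemma \ref{lemma_bound} with a Young parameter, the case $\epsilon=-1$ being immediate. The only differences are in bookkeeping, where you are in fact slightly more careful than the printed proof: your cancellation argument yields the harmless factor $\tfrac{1}{2}$ on $\norm{\beta^{\ohalf}\dtv}{0}{\Fhbp}^2$, and you make explicit both the reserved $\hdgseminorm{\cdv}{\tau_{\bkappa}}^2$ coming from $\tau_{\ast}\geq\tau_0+\tau_{\bkappa}$ and the resulting constraint $\xi>(1+\epsilon)/2$, where the paper simply takes $0<\xi<1$ and leaves the absorption of the jump part of the consistency bound implicit.
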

\begin{proof}
Setting $\cdu=\cdv$ in discrete bilinear forms $a^{(\epsilon)}_{\bkappa,h}$ \eqref{diffusive_form} and $s_h$ \eqref{stability_form}, we immediately obtain that: 
\begin{equation}
\label{difstab_decomp}
\dbilin{a^{(\epsilon)}_{\bkappa,h}}{\cdv}{\cdv}+\dbilin{s_{h}}{\cdv}{\cdv}\eqbydef\norm{\bkappa^{\ohalf}\grdh{\dv}}{0}{\Th{}}^2-(1+\epsilon)\dualprod{\bkappa\grdh\dv}{\tjump{\cdv}}{\bnd{\Th{}}},
+\dualprod{\tau\tjump{\cdv}}{\tjump{\cdv}}{\bnd{\Th{}}}.
\end{equation}
After integration by parts, the advective bilinear form \eqref{advective_form} yields that
\begin{equation}
\label{adv_decomp}
\dbilin{a_{\bbeta,\mu,h}}{\cdv}{\cdv}\eqbydef
\norm{\mu^{\ohalf}_{\ast}\dv}{0}{\Th{}}^2+\norm{\beta^{\ohalf}\dtv}{0}{\Fhbp}^2+\underbrace{\dualprod{\bbeta\dv}{\tjump{\cdv}}{\bnd{\Th{}}}-\frac{1}{2}\dualprod{\beta\dv}{\dv}{\bnd{\Th{}}}}_{=T_1},
\end{equation}
where $\mu_{\ast}\eqbydef\mu+\frac{\nabla\cdot\bbeta}{2}>0$ by virtue of \eqref{coercivity-cond}, and $\beta\eqbydef\bbeta\cdot\vect{n}$. Let us now focus on the (last) quantity $T_1$ in $\eqref{adv_decomp}$. Considering that $\dtv$ and $\beta$ are single-valued on interfaces of the mesh skeleton, and after some tedious algebraic manipulations, we deduce that
\begin{eqnarray}
    T_{1}&=&\dualprod{\beta\tjump{\cdv}}{\tjump{\cdv}}{\bnd{\Th{}}}+\dualprod{\beta\dtv}{\dv-\dtv}{\bnd{\Th{}}}-\frac{1}{2}\dualprod{\beta\dv}{\dv}{\bnd{\Th{}}}\nonumber\\
    &=&\dualprod{\beta\tjump{\cdv}}{\tjump{\cdv}}{\bnd{\Th{}}}-\frac{1}{2}\dualprod{\beta(\dv-\dtv)}{\dv}{\bnd{\Th{}}}+\frac{1}{2}\dualprod{\beta(\dv-\dtv)}{\dtv}{\bnd{\Th{}}}\nonumber\\
    \label{simple_adv_term}
    &=&\frac{1}{2}\dualprod{\beta\tjump{\cdv}}{\tjump{\cdv}}{\bnd{\Th{}}}
\end{eqnarray}
Inserting \eqref{simple_adv_term} in \eqref{adv_decomp}, and finally collecting \eqref{difstab_decomp}, we immediately obtain 
\begin{eqnarray}
%\label{bilinear_decomp}
\dbilin{a^{(\epsilon)}_h}{\cdv}{\cdv}&\eqbydef&\norm{\bkappa^{\ohalf}\grdh{\dv}}{0}{\Th{}}^2+\norm{\mu^{\ohalf}_{\ast}\dv}{0}{\Th{}}^2+\norm{\beta^{\ohalf}\dtv}{0}{\Fhbp}^2+\dualprod{\tau_{\ast}\tjump{\cdv}}{\tjump{\cdv}}{\bnd{\Th{}}}\nonumber\\
&-&(1+\epsilon)\dualprod{\bkappa\grdh\dv}{\tjump{\cdv}}{\bnd{\Th{}}},
\end{eqnarray}
where $\tau_{\ast}\eqbydef\tau+\frac{\bbeta\cdot\vect{n}}{2}$. Following \eqref{coercivity-cond} and \eqref{minimal_tau}, we deduce that $\mu_{\ast}\geq\mu_{0}$ and $\tau_{\ast}\geq\tau_{0}$, and hence
\begin{equation}
%\label{bilinear_decomp}
\dbilin{a^{(\epsilon)}_h}{\cdv}{\cdv}\geq\tnorm{\cdv}^2-(1+\epsilon)\dualprod{\bkappa\grdh\dv}{\tjump{\cdv}}{\bnd{\Th{}}},
\end{equation}
proving immediately the coercivity of the H-NIP scheme ($\epsilon=-1$) with $C=1$. Else, by considering Lemmata \ref{lemma_bound} and assuming $0<\xi<1$, we easily infer that 
\begin{eqnarray*}
%\label{bilinear_decomp}
\dbilin{a^{(\epsilon)}_h}{\cdv}{\cdv}&\geq&(1-\dfrac{\xi\eta_0}{\alpha_0})\norm{\bkappa^{\ohalf}\grdh{\dv}}{0}{\Th{}}^2+\norm{\mu^{\ohalf}_0\dv}{0}{\Th{}}^2+\norm{\beta^{\ohalf}\dtv}{0}{\Fhbp}^2+\hdgseminorm{\cdv}{\tau_{0}}^2,\\
&\geq&C_{\xi}\tnorm{\cdv}
\end{eqnarray*}
where $C_{\xi}\eqbydef\min(1,1-\frac{\xi\eta_0}{\alpha_0})$. The proof is ended by choosing $\alpha_0$ such that $\alpha_0>\xi\eta_0$.
\end{proof}

\begin{remark}
A straightforward consequence of the consistency and coercivity requirements via the Lax--Milgram Theorem is the well-posedness of \eqref{HIP-method}; \ie the existence and uniqueness of the discrete solution $\cdu\in\CVh{}$ are ensured.
\end{remark}

\subsection{Adaptive stabilization strategy}

%\begin{linenumbers}
In practice, the choice of $\tau$ is quite delicate, as it strongly affects the accuracy of the HDG method \eqref{HIP-method}. Indeed, its definition directly impacts the numerical flux approximations on interfaces $\f{}\in\Fh{+}$. To prove its relevance, let us apply a continuity argument, \ie $\jump{\dtsg(\cdu)}=0$, on an interior interface $\f{}\eqbydef\bnd{\e{1}}\cap\bnd{\e{2}}$. We immediately deduce that $(\dtu,\dtsg)$ can be expressed only in terms of the discrete variables $(\du,\dsg)$ on both sides of $\f{}$,
%\end{linenumbers}
\begin{subequations}\label{eqn:tracesdefinition}
    \begin{empheq}[left=\empheqlbrace]{align}
 		\dtu &=\wmean{\du}{\vom}+\alpha\jump{\dsg},\label{subeqn-1:tracepotential}\\
		\dtsg &=\bmean{\dsg}{\vom}+\eta\jump{\du},\label{subeqn-2:traceflux}
     \end{empheq}
   \end{subequations}
where $\dsg\eqbydef-\bkappa\grdh\du+\bbeta\du$, and the parameters $\vom$, $\alpha$ and $\eta$ are given below by:
\begin{equation}
\label{weightfunc}
\vom\eqbydef\bigg(\dfrac{\tau_1}{\tau_1+\tau_2},\dfrac{\tau_2}{\tau_1+\tau_2}\bigg),\quad \alpha\eqbydef\dfrac{1}{\tau_1+\tau_2},\quad\textrm{and}\quad\eta\eqbydef\dfrac{\tau_1\tau_2}{\tau_1+\tau_2},
\end{equation}
for any given finite value $\tau_{i}\eqbydef\tau_{\e{i},\f{}}$ of the total penalty parameter. To derive a suitable analytical expression of $\tau$, we now treat both hyperbolic and elliptic regimes distinctively. % following our physical intuition and some mathematical considerations.

\subsubsection{Hyperbolic regime}

We assume here that $\tau_{\bkappa}\eqbydef 0$, and hence $\tau\eqbydef\tau_{\bbeta}$ by virtue of \eqref{first_second_assump}. To ensure the minimal requirement \eqref{minimal_tau} in the context of the hyperbolic regime, the advective penalty parameter must be chosen such that $\tau_{\bbeta}>\abs{\bbeta\cdot\vect{n}}/2$ on the mesh skeleton. Furthermore, it is well known that an arbitrary choice of the stability parameter $\tau_{\bbeta}$ can be detrimental in the context of pure-advective problems: discontinuities in the boundary data may trigger large spurious oscillations in the numerical solution. However, these drawbacks can be easily circumvented by adopting an \textit{upwind-based} strategy. To this aim, we now consider the following definition of the advective penalty parameter:
\begin{definition}[$\theta$-upwind penalty]
\label{up_tau}
For all $\e{}\in\Th{hyp}$ and $\f{}\in\bnd{\e{}}$, we assume the following definition of the advective stabilization penalty parameter: 
\begin{equation}
    \label{theta_tau}
    \tau^{\theta}_{\bbeta_{\e{},\f{}}}\eqbydef\theta\abs{\bbeta\cdot\vect{n}_{\e{},\f{}}}\quad\textrm{on}\quad\f{}\in\bnd{\e{}},%\quad\textrm{with}\quad\theta\geq\frac{1}{2}.
\end{equation}
where $\theta> 1/2$ in order to ensure Proposition \ref{mini_requirement}.
\end{definition}
Following from the regularity of the Darcy field $\bbeta$, \ie $\jump{\bbeta}=0$, we infer that the advective penalty parameter as defined in \eqref{theta_tau} is single-valued on interior interfaces of the hyperbolic region. Let us now introduce the \textit{signum function}, which is given as follows:
\begin{equation}
    \sgn{x}\eqbydef\dfrac{x}{\abs{x}}=
    \begin{cases}
 	    -1 & \textrm{if}\quad x< 0,\\
 		0 & \textrm{if}\quad x=0,\\
 		+1 &  \textrm{if}\quad x>0.
     \end{cases}
\end{equation}
\begin{proposition}[$\theta$-upwind fluxes]
Following Definition \ref{up_tau}, for all $\e{1},\e{2}\in\Th{hyp}$ and $\f{}\eqbydef\bnd{\e{1}\cap\bnd{\e{2}}}$, the corresponding numerical fluxes $(\dtu^{\theta},\dtsg^{\theta})$ on $\f{}$ are given by
\begin{subequations}\label{eqn:traces_adv_theta}         
    \begin{empheq}[left=\empheqlbrace]{align}
 		\dtu^{\theta} &\eqbydef\wmean{\du}{}+\dfrac{\bbeta}{2\tau^{\theta}_{\bbeta}}\jump{\du}= \wmean{\du}{\vom_{\theta}},\label{subeqn-1:tracepotential_theta}\\
		\dtsg^{\theta} &\eqbydef\bbeta\bmean{\du}{}+\dfrac{\tau^{\theta}_{\bbeta}}{2}\jump{\du}=\bbeta\bmean{\du}{\vom^{\theta}},\label{subeqn-2:traceflux_theta}
    \end{empheq}
\end{subequations}
where $\vom_{\theta}\eqbydef(\frac{1}{2}+\frac{\sgn{\bbeta\cdot\vect{n}_1}}{2\theta},\frac{1}{2}+\frac{\sgn{\bbeta\cdot\vect{n}_2}}{2\theta})$ and $\vom^{\theta}\eqbydef(\frac{1}{2}+\frac{\theta}{2\sgn{\bbeta\cdot\vect{n}_1}},\frac{1}{2}+\frac{\theta}{2\sgn{\bbeta\cdot\vect{n}_2}})$.
\end{proposition}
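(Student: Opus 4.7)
The plan is to derive both identities by specializing the general expressions \eqref{subeqn-1:tracepotential}--\eqref{subeqn-2:traceflux} to the hyperbolic regime and then matching the result to the target weighted-average forms via sign-function manipulations. Concretely, I would first observe that on $\f{}\eqbydef\bnd{\e{1}}\cap\bnd{\e{2}}$ with $\e{1},\e{2}\in\Th{hyp}$ one has $\bkappa\equiv\vect{0}$, so the total flux reduces to $\dsg=\bbeta\du$ on either side of $\f{}$; moreover, the regularity of $\bbeta$ (in particular, $\bbeta$ single-valued across $\f{}$) together with $\vect{n}_{2}=-\vect{n}_{1}$ gives $\abs{\bbeta\cdot\vect{n}_{1}}=\abs{\bbeta\cdot\vect{n}_{2}}$ and hence, by Definition~\ref{up_tau}, $\tau^{\theta}_{\bbeta,1}=\tau^{\theta}_{\bbeta,2}\eqbydef\tau^{\theta}_{\bbeta}$. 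Plugging this equality into \eqref{weightfunc} collapses the weight data to $\vom=(\tfrac{1}{2},\tfrac{1}{2})$, $\alpha=1/(2\tau^{\theta}_{\bbeta})$ and $\eta=\tau^{\theta}_{\bbeta}/2$.

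Next I would derive the expression for $\dtu^{\theta}$. Substituting $\dsg=\bbeta\du$ into \eqref{subeqn-1:tracepotential} and using the single-valuedness of $\bbeta$ yields
\begin{equation*}
\jump{\dsg}=\bbeta\cdot(\du_{1}\vect{n}_{1}+\du_{2}\vect{n}_{2})=\bbeta\cdot\jump{\du},
\end{equation*}
which gives $\dtu^{\theta}=\wmean{\du}{}+\tfrac{1}{2\tau^{\theta}_{\bbeta}}\bbeta\cdot\jump{\du}$, matching the first equality of \eqref{subeqn-1:tracepotential_theta}. To cast it as $\wmean{\du}{\vom_{\theta}}$, I would use the identity $\bbeta\cdot\vect{n}_{i}=\abs{\bbeta\cdot\vect{n}}\,\sgn{\bbeta\cdot\vect{n}_{i}}$ on each side and rewrite
\begin{equation*}
\frac{1}{2\tau^{\theta}_{\bbeta}}\,\bbeta\cdot\jump{\du}=\tfrac{1}{2\theta}\bigl(\sgn{\bbeta\cdot\vect{n}_{1}}\du_{1}+\sgn{\bbeta\cdot\vect{n}_{2}}\du_{2}\bigr),
\end{equation*}
which, when added to $\wmean{\du}{}=\tfrac{1}{2}(\du_{1}+\du_{2})$, reproduces exactly the weights of $\vom_{\theta}$ declared in the statement.

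The companion identity for $\dtsg^{\theta}$ is then obtained analogously from \eqref{subeqn-2:traceflux}: single-valuedness of $\bbeta$ allows pulling it out of $\bmean{\dsg}{\vom}=\bbeta\,\bmean{\du}{}$, while $\eta\jump{\du}=\tfrac{\tau^{\theta}_{\bbeta}}{2}\jump{\du}$ combines with the trivial average term to give the first equality of \eqref{subeqn-2:traceflux_theta}. To rewrite this as $\bbeta\,\bmean{\du}{\vom^{\theta}}$, I would again exploit $\tau^{\theta}_{\bbeta}=\theta\abs{\bbeta\cdot\vect{n}}$ together with the algebraic identity $1/\sgn{x}=\sgn{x}$ for $x\neq 0$, and regroup the coefficients of $\du_{1}$ and $\du_{2}$ so that the weights appear in the conjugate order prescribed by $\bmean{\cdot}{\cdot}$ in \eqref{eqn:wmean}.

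The substitutions themselves are routine; the only subtle point -- and the one I expect to require careful bookkeeping -- is the sign tracking when converting between $\bbeta\cdot\vect{n}_{i}$, its absolute value, and $\sgn{\bbeta\cdot\vect{n}_{i}}$, because the opposite orientations on the two sides of $\f{}$ and the fact that $\bmean{\cdot}{\cdot}$ swaps weights make it easy to produce a form that looks correct up to a sign flip. Once the orientation convention is fixed coherently on both sides of the interface, the identities follow by direct comparison of the two polynomial expressions in $(\du_{1},\du_{2})$.
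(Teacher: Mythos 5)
Your proposal follows exactly the paper's (one-line) proof --- substitute the definition \eqref{theta_tau} of $\tau^{\theta}_{\bbeta}$ into the general trace formulas \eqref{eqn:tracesdefinition}, use $\bkappa\equiv\vect{0}$ on $\Th{hyp}$ and the single-valuedness of $\bbeta$ so that $\tau_1=\tau_2$ collapses the weights to $(\tfrac12,\tfrac12)$ --- and merely supplies the routine details the paper declares ``evident.'' The sign bookkeeping you flag for the flux identity is indeed the one delicate point: with the paper's convention $\bmean{v}{\vom}=\omega_2v_1+\omega_1v_2$, the direct computation of the normal flux yields $\bbeta\,\wmean{\du}{\vom^{\theta}}$ rather than $\bbeta\,\bmean{\du}{\vom^{\theta}}$ (equivalently, the two components of $\vom^{\theta}$ must be interchanged, as the upwind limit $\theta=1$ confirms), so the residual mismatch you anticipated lies in the statement's notation rather than in your argument.
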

\begin{proof}
The proof is evident by substituting the definition \eqref{theta_tau} in \eqref{eqn:tracesdefinition}, and assuming that $\bkappa$ is null in $\Th{hyp}$ and that $\bbeta$ is single-valued on the mesh skeleton. 
\end{proof}
\begin{remark}[Traditional schemes]
\label{traditional-schemes}
By appropriately selecting the value of $\theta$ in \eqref{theta_tau}, we can then establish bridges with some well-known stabilization schemes. 
\begin{itemize}
    \item \textbf{Upwind-scheme}: By setting $\theta=1$, we can observe that $\vom_{1}=\vom^{1}$, and we recover the standard definition of upwinding fluxes denoted by $(\dtu^{\mathrm{up}},\dtsg^{\mathrm{up}})$,
    \begin{equation}
    \label{eqn:upwindtraces}
    \dtu^{\mathrm{up}}\eqbydef\begin{cases}%[left=\empheqlbrace]{align}
 		   		u_i & \textrm{if}\quad\bbeta\cdot\vect{n}_i>0,\\
 		 			\mean{\du}	& \textrm{if}\quad\bbeta\cdot\vect{n}_i=0,
     			\end{cases}\quad\textrm{and}\quad\dtsg^{\mathrm{up}}\eqbydef\bbeta\dtu^{\mathrm{up}}.
    \end{equation}
    The corresponding upwind penalty parameter is denoted $\tau^{\mathrm{up}}_{\bbeta}\eqbydef\abs{\bbeta\cdot\vect{n}}$.%since $\wmean{\cdot}{\vom_{1}}=\wmean{\cdot}{\vom^{1}}$.
    \item \textbf{Centered-scheme}: Assuming now $\theta\to +\infty$, we then obtain the centered fluxes $(\dtu^{\mathrm{c}},\dtsg^{\mathrm{c}})$,  
    \begin{equation}
        \dtu^{\mathrm{c}}\eqbydef\lim_{\theta\to+\infty}\wmean{\du}{\vom_{\theta}}=\mean{\du}\quad\textrm{and}\quad\dtsg^{\mathrm{c}}\eqbydef\bbeta\bmean{\du}{}+\dfrac{\tau^{\infty}_{\bbeta}}{2}\jump{\du}.
    \end{equation}
    However, this situation will be precluded in the rest of the paper as it consists of assigning an infinite value to the penalty parameter since $\tau^{\infty}_{\bbeta}=\lim_{\theta\to+\infty}\tau^{\theta}_{\bbeta}=+\infty$. This choice significantly reduces the accuracy of the discrete solution since it converges to the discrete solution produced by the standard conforming Galerkin method characterized by spurious oscillations. 
\end{itemize}
\end{remark}
\begin{remark}[Degenerate outflow boundaries]
\label{degenerate-out-bound}
Let us finally precisely denote the transmission conditions at degenerative outflow boundaries that belong to the hyperbolic region. For all $\f{}\in\Fhbp$, we impose that $\dtsg(\cdu)\cdot\vect{n}=(\bbeta\cdot\vect{n})\dtu$, where $\dtsg(\cdu)\eqbydef\bbeta\du+\tau_{\bbeta}\tjump{\cdu}$. By combining these expressions, we observe that the role of $\tau
_{\bbeta}$ on outflow boundaries is clearly insignificant, since for any finite value of $\tau_{\bbeta}>0$, we then obtain $\dtu\eqbydef\du$.
\end{remark}
Thereafter, we shall assume that the advective stabilization penalty parameter $\tau_{\bbeta}$ is chosen accordingly with the $\theta$-upwind strategy described in Definition \ref{up_tau}.%, \ie $\tau_{\bbeta}\eqbydef\tau^{\theta}_{\bbeta}$.

\subsubsection{Elliptic regime}
\label{sub-elliptic-regime}

We assume here that the diffusion is not degenerate, \ie $\underbar{$\kappa$}>0$. To ensure the minimal requirement \eqref{minimal_tau} in the context of the elliptic regime, the total penalty parameter is chosen such that $\tau\geq\tau_{\bkappa}+\abs{\bbeta\cdot\vect{n}}/2$. Let us now introduce the P\'eclet number to locally characterize the regime of the physical process. 
\begin{definition}
For all $\e{}\in\Th{ell}$ and $\f{}\in\bnd{\e{}}$, we define the local (oriented) P\'eclet number as follows:
\begin{equation}
    \label{Peclet_number}
    \peclet{\e{},\f{}}^{\theta}\eqbydef\theta\,\dfrac{\bbeta\cdot\vect{n}_{\e{},\f{}}}{\tau_{\bkappa_{\E{},\f{}}}}\quad\textrm{on}\quad\f{}\in\bnd{\e{}},
\end{equation}
where $\theta$ is a positive constant that we will precisely define below. 
\end{definition}
Concretely, the regime is considered as locally (i) diffusion-dominated if $\abs{\peclet{\e{},\f{}}^{\theta}}\sim\theta$, and (ii) advection-dominated if $\abs{\peclet{\e{},\f{}}^{\theta}}\sim+\infty$. Following the definition \eqref{Peclet_number}, we emphasize here that $\peclet{}^{\theta}$ can be distinctively evaluated on both sides of an interface $\f{}\in\Fhi$. For all $\e{}\in\Th{ell}$ and $\f{}\in\bnd{\e{}}$, we now assume the following generic form of the (total) stabilization penalty parameter \ie 
\begin{equation}
    \tau_{\e{},\f{}}\eqbydef\tau_{\bkappa_{\e{},\f{}}}\abs{\mathcal{A}}(\peclet{\e{},\f{}}^{\theta}),
\end{equation}
where $\abs{\mathcal{A}}:\R\rightarrow\R$ is a given function respecting the following minimal requirements:
\begin{itemize}
    \item[(R1)] $\abs{\mathcal{A}}$ must be an even convex function, \ie $\forall s\in\R$ then $\abs{\mathcal{A}}(-s)=\abs{\mathcal{A}}(s)$ respecting $\abs{\mathcal{A}}(0)=1$. The latter condition allows recovery of the definition of the penalty term in the pure diffusive limit, \ie if $\peclet{\e{},\f{}}^{\theta}=0$, then $\tau_{\e{},\f{}}\eqbydef\tau_{\bkappa_{\e{},\f{}}}$. This is in accordance with the first assumption \eqref{first_second_assump} as described in Proposition \ref{mini_requirement}.
    \item[(R2)] The function $\abs{\mathcal{A}}$ must respect the following asymptotic behaviors:
    \begin{equation}
        \lim_{\abs{s}\to\infty}\dfrac{\abs{\mathcal{A}}(s)}{s}=\sgn{s},
    \end{equation}
    which is in accordance with the parity requirement (R1). It allows recovery of the upwinding-based strategy for purely hyperbolic problems, and consequently the definition of the corresponding fluxes \eqref{eqn:upwindtraces} for the advection-dominated regime:
    \begin{equation}
        \tau_{\e{},\f{}}\eqbydef\theta\bbeta\cdot\vect{n}_{\e{},\f{}}\lim_{\abs{\peclet{\e{},\f{}}^{\theta}}\to\infty}\dfrac{\abs{\mathcal{A}}(\peclet{\e{},\f{}}^{\theta})}{\peclet{\e{},\f{}}^{\theta}}=\theta\abs{\bbeta\cdot\vect{n}_{\e{},\f{}}}.
    \end{equation}
     This is in accordance with the second assumption \eqref{first_second_assump} as described in Proposition \ref{mini_requirement}.
     \item[(R3)] There exists $\theta_0\in\R$ such that for all $\theta\geq\theta_0$ and $s\in\R$, then the following holds: \begin{equation}
        \abs{\mathcal{A}}(\theta s)\geq 1+\dfrac{\abs{s}}{2},
     \end{equation}
     which is in accordance with the convexity requirement (R1). This argument is crucial to ensure that $a^{(\epsilon)}_h$ is $\CVh{}$-coercive.%, and will be discussed below.
\end{itemize}
Following the above hypotheses of $\abs{\mathcal{A}}$, we can recover some well-known stabilization strategies: 
\begin{itemize}
    \item[$\blacktriangleright$] \textit{The Additive scheme}: By setting $\abs{\mathcal{A}}_{\textrm{add}}(s)\eqbydef 1+\abs{s}$, we easily infer that
    \begin{equation}
        \tau_{\e{},\f{}}\eqbydef\tau_{\bkappa_{\e{},\f{}}}(1+\abs{\peclet{\e{},\f{}}^{\theta}})=\tau_{\bkappa_{\e{},\f{}}}+\tau^{\theta}_{\bbeta_{\e{},\f{}}},
    \end{equation}
    where $\tau^{\theta}_{\bbeta_{\e{},\f{}}}\eqbydef\theta\abs{\bbeta\cdot\vect{n}_{\e{},\f{}}}$ corresponds to the advective stability parameter chosen in accordance with the $\theta$-upwind strategy. Here, we assume that the total stability parameter is simply equal to the sum of its distinctive diffusive and advective contributions, respectively. It is evident that $\abs{\mathcal{A}}_{\textrm{add}}$ respects all above criteria (R1-R3) assuming $\theta\geq 1/2$.
    \item[$\blacktriangleright$] \textit{The Scharfetter--Gummel scheme}: By setting  $\abs{\mathcal{A}}_{\textrm{sg}}(s)\eqbydef\mathcal{B}(-\abs{s})$ where
    \begin{equation}
        \mathcal{B}(s)\eqbydef\begin{cases}
            \dfrac{s}{e^{s}-1} & \textrm{if}\quad s\neq 0,\\
            1 & \textrm{else}.
        \end{cases}
    \end{equation}
    denotes the (well-known) Bernoulli function. Let us notice that the following holds 
    \begin{equation}
        1+\dfrac{\abs{s}}{2}\leq\mathcal{B}(-\abs{s})\leq 1+\abs{s},\quad\forall s\in\R.
    \end{equation}
    Thus, by using a scaling argument, we can easily infer that for all $\theta\in\R$,
    \begin{equation}
        \abs{\mathcal{A}}_{\textrm{add}}(\theta s/2)\leq\abs{\mathcal{A}}_{\textrm{sg}}(\theta s)\leq\abs{\mathcal{A}}_{\textrm{add}}(\theta s),\quad\forall s\in\R,
    \end{equation}
    proving immediately the last condition (R3) for any given $\theta\geq 1$. In practice, the SG-scheme is particularly interesting since it introduces less artificial diffusion than the Add-scheme in the diffusion-dominated regime.  
\end{itemize}
An illustration of both stabilization functions $\abs{\mathcal{A}}_{\textrm{add}}(\theta s)$ and $\abs{\mathcal{A}}_{\textrm{sg}}(\theta s)$ is given below in \figref{fig:AD_SG_functions} using different admissible values of $\theta$.\par
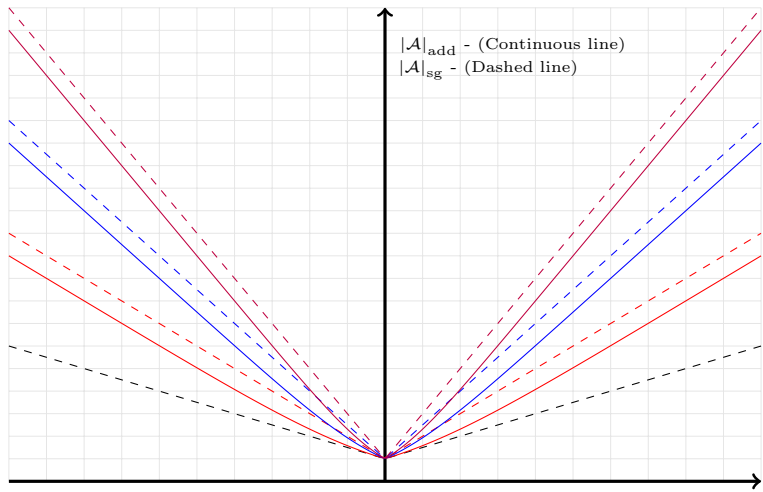
\begin{figure}[!ht]
    \centering   
    \begin{tikzpicture}[domain=-10:10,xscale=0.5,yscale=0.3]
    %[legend cell align=left, title= H-IIP, legend pos = south east, font=\tiny]
    
        \draw[step=1cm, gray, very thin,opacity=0.2] (-10, 0) grid (10, 21);
        \draw[very thick, ->] (-10, 0) -- (10, 0);
        \draw[very thick, ->] (0,0) -- (0,21);
        \draw[domain= -10:10,samples=200,dashed,color=black,scale=1] plot ({\x},{1+abs(0.5*\x)/1});
        \draw[domain= -10:10,samples=200,color=red,scale=1] plot ({\x},{-abs(\x)/(exp(-abs(\x))-1)});
        \draw[domain=-10:10,samples=200,dashed, color=red,scale=1] plot ({\x},{1+abs(1*\x)});
        %\addlegendentry{Case 1}
        \draw[domain= -10:10,samples=200,color=blue,scale=1] plot ({\x},{-abs(1.5*\x)/(exp(-abs(1.5*\x))-1)});
        \draw[domain=-10:10,samples=200,dashed, color=blue,scale=1] plot ({\x},{1+abs(1.5*\x)});
        \draw[domain=-10:10,samples=200,dashed, color=purple,scale=1] plot ({\x},{1+abs(2*\x)});
        \draw[domain= -10:10,samples=200,color=purple,scale=1] plot ({\x},{-abs(2*\x)/(exp(-abs(2*\x))-1)});
        \draw (3.4,19.3) node {\tiny{$\abs{\mathcal{A}}_{\textrm{add}}$ - (Continuous line)}};
        \draw (2.75,18.25) node {\tiny{$\abs{\mathcal{A}}_{\textrm{sg}}$ - (Dashed line)}};
        %\legend{$\abs{\mathcal{A}}_{\textrm{add}}(\theta s)$,$\abs{\mathcal{A}}_{\textrm{sg}}(\theta s)$}
        %\legend{$d=3$}
    \end{tikzpicture}
    \caption{An illustration of both stabilization functions $\abs{\mathcal{A}}_{\textrm{add}}(\theta s)$ (dashed-line) and $\abs{\mathcal{A}}_{\textrm{sg}}(\theta s)$ (continuous-line) for the set of values $\theta\eqbydef\{1/2,1,3/2,2\}$ in black, red, blue and purple, respectively. }
    \label{fig:AD_SG_functions}
\end{figure}
 
\section{Numerical results}
\label{Numerical_exp}

To close this section, we provide some numerical experiments illustrating the robustness and accuracy of the proposed H-IP method for solving degenerate advection-diffusion-reaction model problems. For this aim, we next focus on three distinctive physical situations, namely, nondegenerate, fully degenerate and locally degenerate problems, respectively. All of our numerical experiments are performed using the high-performance finite element library called \textsc{NGSolve} \citep{Schoberl2014c++}, and all developed source codes are available for free download from the following Github repository\footnote{\url{https://github.com/GregoryETANGSALE/HDG-Degenerate-ADR-equation}}. 

\subsection{Nondegenerate problem}
We assume here that both $\bkappa$ and $\bbeta$ are nonnull for the whole domain $\Domain\eqbydef[0,1]^2$. In this first example, as proposed by Egger and Schöberl in \citep{egger2009mixed}, we illustrate the ability of the proposed H-IP formalism to deal (efficiently and automatically) with physical processes characterized by a large range of P\'eclet numbers. Both the velocity field $\bbeta$ and the dispersion matrix $\bkappa$ are supposed to be constant on $\Domain$ -- \ie $\bbeta\eqbydef(\beta_1,\beta_2)$ with $\beta_{1,2}\in\R$ and $\bkappa\eqbydef\kappa \vect{I_2}$, where $\vect{I_d}$ denotes the identity matrix in $\R^{d,d}$ and $\kappa > 0$. The exact solution is given by
\begin{equation}
    u(x,y) = [x+ (e^{\beta_1 x / \kappa}-1)/(1- e^{\beta_1 / \kappa}) ] \cdot [y+ (e^{\beta_2 y / \kappa}-1)/(1- e^{\beta_2 / \kappa}) ],
\end{equation}
and the right-hand-side $f$ is chosen such that the exact solution $u$ is verified. For our numerical study, we then set $\bbeta\eqbydef(2,1)$, and we select a large range of $\kappa\eqbydef \{ 5e-1, 5e-2, 5e-3 \}$ to control the ratio between the diffusive and advective contributions. The exact solution $u$ displays sharper fronts on the top and right boundaries of $\Domain$ as $\kappa$ becomes smaller. Since the diffusive part is nonnull inside $\Domain$, we can distinguish three variants of the H-IP method: the H-IIP, H-NIP and H-SIP schemes, respectively. Here, we analyze the influence of the stabilization strategy, namely, the Add- or SG-schemes as defined in Section \ref{sub-elliptic-regime}, on the behavior of discrete solutions. For clarity of our exposition, we set $\theta\eqbydef 1$. Standard $h$- and $k$-refinement strategies are used to compute the discrete $\LL$-errors and estimated convergence rates (ECRs). A history of convergence of the three variants is presented in Table \ref{tableA} for different values of $\kappa\eqbydef\{5e-1,5e-2\}$ and polynomial degrees $k\eqbydef\{1,2\}$, and for both Add- and SG-schemes, respectively. First, a brief analysis indicates that the H-IIP and H-NIP schemes behave differently from the H-SIP scheme regardless of the stabilization function. Their convergence orders are (strongly) influenced by the polynomial parity of $k$. We observe that the convergence rate is suboptimal (with order $k$) only for even $k$, and optimal (with order $k+1$) for odd $k$. The situation is somewhat different for the H-SIP method, since it always converges optimally for all $k$. These statements agree with the theoretical results established by Shin \al in \citep{SHIN2015292} (see \eg \citep{FabienNM2019} for the pure diffusive problem). To pursue our comparative analysis, we then illustrate in Figures \ref{Test-non-degenerate}b and \ref{Test-non-degenerate}d the spatial distribution of the computed $\LL$-error on the whole domain for both Add- and SG-schemes. These illustrations indicate that the SG-scheme produces much less artificial diffusion error than the Add-scheme and hence captures sharp fronts much more effectively. Thus, we will favor the H-SIP variant coupled with the Scharfetter-Gummel scheme in elliptic regions in all future experiments.\par 
\begin{table}[!ht]
\centering
\small
\begin{tabular}{|cc|cccc|cccc|}
\hline
\multirow{3}{*}{$k$} & \multirow{3}{*}{$h^{-1}$} & \multicolumn{4}{c|}{Additive-Upwind} & \multicolumn{4}{c|}{Scharfetter-Gummel} \\
& & \multicolumn{2}{c}{$\kappa = 5e{-1}$} & \multicolumn{2}{c|}{$\kappa = 5e{-2}$} & \multicolumn{2}{c}{$\kappa = 5e{-1}$} & \multicolumn{2}{c|}{$\kappa = 5e{-2}$} \\
& & $\norm{u-\du}{0}{\Th{}}$ & ECR & $\norm{u-\du}{0}{\Th{}}$ & ECR & $\norm{u-\du}{0}{\Th{}}$ & ECR & $\norm{u-\du}{0}{\Th{}}$ & ECR \\
\hline
\hline
& & \multicolumn{8}{c|}{H-NIP} \\
\hline
\multirow{5}{*}{$1$} 
& $4$ & $2.2e-03$ & -- & $9.7e-02$ & -- & $2.1e-03$ & -- & $8.3e-02$ & --   \\
& $8$ & $5.4e-04$ & $2.05$ & $4.1e-02$ & $1.24$ & $5.3e-04$ & $2.02$ & $3.7e-02$ & $1.18$ \\
& $16$ & $1.3e-04$ & $2.03$ & $1.3e-02$ & $1.72$ & $1.3e-04$ & $2.01$ & $1.2e-02$ & $1.67$ \\
& $32$ & $3.3e-05$ & $2.02$ & $3.1e-03$ & $2.01$ & $3.2e-05$ & $2.01$ & $3.0e-03$ & $1.97$ \\
& $64$ & $8.1e-06$ & $2.01$ & $7.4e-04$ & $2.06$ & $8.1e-06$ & $2.00$ & $7.2e-04$ & $2.03$ \\
\hline
\multirow{5}{*}{$2$} 
& $4$ & $4.0e-04$ & -- & $5.1e-02$ & -- & $4.0e-04$ & -- & $5.0e-02$ & -- \\
& $8$ & $8.8e-05$ & $2.16$ & $1.7e-02$ & $1.61$ & $8.9e-05$ & $2.17$ & $1.7e-02$ & $1.56$ \\
& $16$ & $2.1e-05$ & $2.05$ & $4.2e-03$ & $1.99$ & $2.1e-05$ & $2.06$ & $4.3e-03$ & $1.98$ \\
& $32$ & $5.3e-06$ & $2.01$ & $9.4e-04$ & $2.15$ & $5.3e-06$ & $2.01$ & $9.6e-04$ & $2.16$ \\
& $64$ & $1.3e-06$ & $2.00$ & $2.2e-04$ & $2.09$ & $1.3e-06$ & $2.00$ & $2.2e-04$ & $2.10$ \\
\hline
& & \multicolumn{8}{c|}{H-IIP} \\
\hline
\multirow{5}{*}{$1$} 
& $4$ & $2.2e-03$ & -- & $9.7e-02$ & -- & $2.1e-03$ & -- & $8.3e-02$ & -- \\
& $8$ & $5.4e-04$ & $2.05$ & $4.1e-02$ & $1.24$ & $5.3e-04$ & $2.02$ & $3.7e-02$ & $1.18$ \\
& $16$ & $1.3e-04$ & $2.03$ & $1.2e-02$ & $1.72$ & $1.3e-04$ & $2.01$ & $1.2e-02$ & $1.67$ \\
& $32$ & $3.3e-05$ & $2.02$ & $3.1e-03$ & $2.01$ & $3.2e-05$ & $2.01$ & $3.0e-03$ & $1.97$ \\
& $64$ & $8.1e-06$ & $2.01$ & $7.4e-04$ & $2.06$ & $8.1e-06$ & $2.00$ & $7.2e-04$ & $2.03$ \\
\hline
\multirow{5}{*}{$2$} 
& $4$ & $2.7e-04$ & -- & $4.6e-02$ & -- & $2.7e-04$ & -- & $4.3e-02$ & -- \\
& $8$ & $5.2e-05$ & $2.37$ & $1.4e-02$ & $1.75$ & $5.3e-05$ & $2.36$ & $1.3e-02$ & $1.70$ \\
& $16$ & $1.2e-05$ & $2.14$ & $3.0e-03$ & $2.18$ & $1.2e-05$ & $2.14$ & $3.0e-03$ & $2.15$ \\
& $32$ & $2.9e-06$ & $2.04$ & $5.9e-04$ & $2.35$ & $2.9e-06$ & $2.04$ & $5.9e-04$ & $2.34$ \\
& $64$ & $7.2e-07$ & $2.01$ & $1.3e-04$ & $2.23$ & $7.2e-07$ & $2.01$ & $1.3e-04$ & $2.23$ \\
\hline
& & \multicolumn{8}{c|}{H-SIP} \\
\hline
\multirow{5}{*}{$1$} 
& $4$ & $2.1e-03$ & -- & $6.9e-02$ & -- & $2.1e-03$ & -- & $8.3e-02$ & -- \\
& $8$ & $5.2e-04$ & $2.02$ & $3.3e-02$ & $1.06$ & $5.3e-04$ & $2.03$ & $3.7e-02$ & $1.17$ \\
& $16$ & $1.3e-04$ & $2.01$ & $1.1e-02$ & $1.59$ & $1.3e-04$ & $2.02$ & $1.2e-02$ & $1.66$ \\
& $32$ & $3.2e-05$ & $2.01$ & $2.9e-03$ & $1.93$ & $3.2e-05$ & $2.01$ & $3.0e-03$ & $1.97$ \\
& $64$ & $8.1e-06$ & $2.00$ & $7.2e-04$ & $2.01$ & $8.1e-06$ & $2.00$ & $7.2e-04$ & $2.03$ \\
\hline
\multirow{5}{*}{$2$} 
& $4$ & $1.6e-04$ & -- & $3.5e-02$ & -- & $1.7e-04$ & -- & $3.8e-02$ & -- \\
& $8$ & $2.1e-05$ & $2.95$ & $1.0e-02$ & $1.78$ & $2.1e-05$ & $2.95$ & $1.1e-02$ & $1.85$ \\
& $16$ & $2.7e-06$ & $2.99$ & $1.9e-03$ & $2.40$ & $2.7e-06$ & $2.99$ & $2.0e-03$ & $2.44$ \\
& $32$ & $3.4e-07$ & $3.00$ & $2.8e-04$ & $2.77$ & $3.4e-07$ & $3.00$ & $2.9e-04$ & $2.79$ \\
& $64$ & $4.2e-08$ & $3.00$ & $3.7e-05$ & $2.93$ & $4.2e-08$ & $3.00$ & $3.7e-05$ & $2.94$ \\
\hline
\end{tabular}
\caption{Test A - History of convergence of the H-NIP, H-IIP and H-SIP methods on uniform square meshes using the Additive-Upwind and Scharfetter-Gummel scheme with $\kappa = \{ 5e{-1}, 5e{-2} \}$.}
\label{tableA}
\end{table}

\begin{figure}[!ht]
\centering
\subfloat[]{\includegraphics[scale=0.26,trim = 555 120 450 130, clip=true]{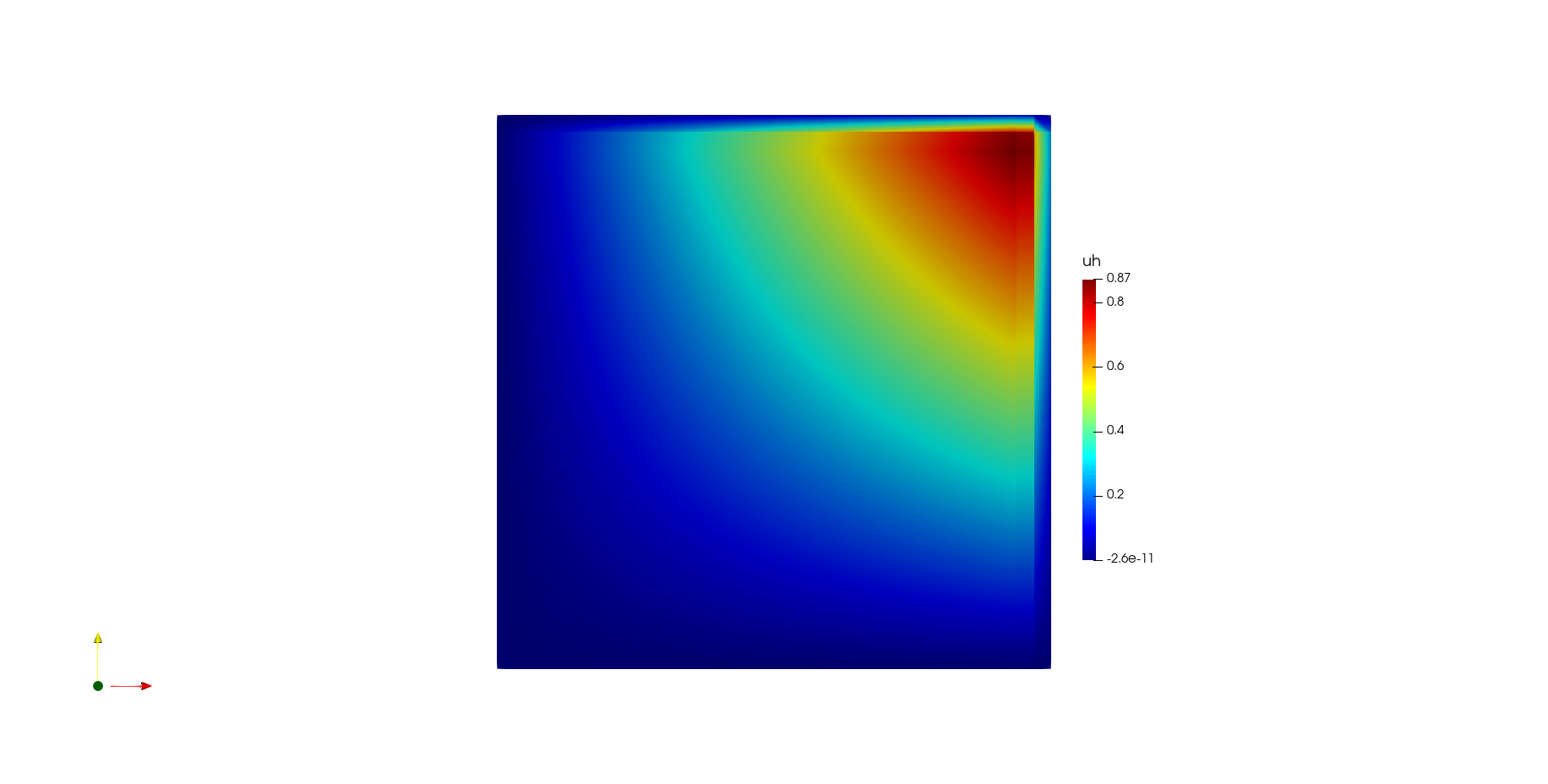}}  
\subfloat[]{\includegraphics[scale=0.26,trim = 555 120 450 130, clip=true]{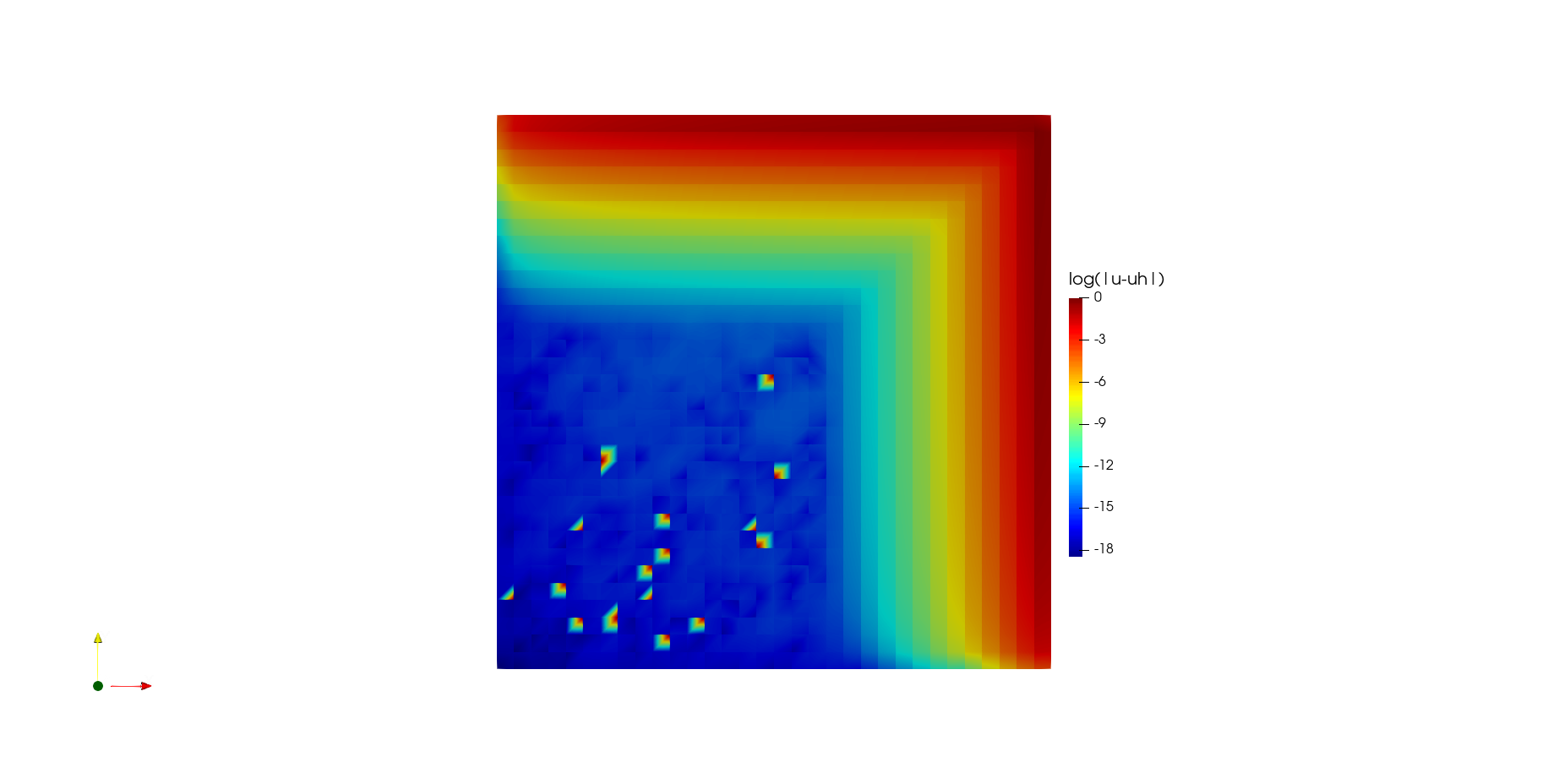}} \\
\subfloat[]{\includegraphics[scale=0.26,trim = 555 120 450 130, clip=true]{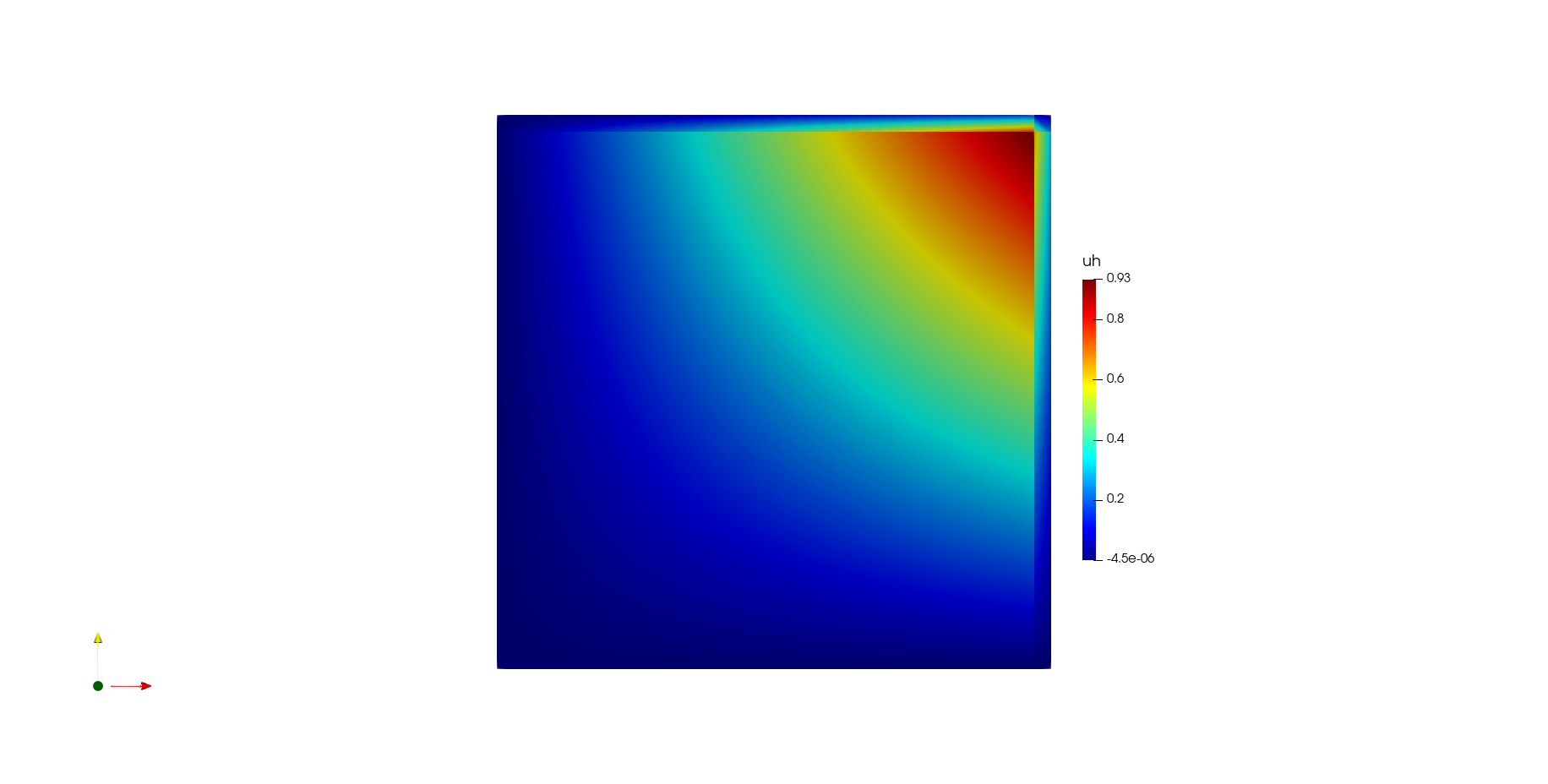}}
\subfloat[]{\includegraphics[scale=0.26,trim = 555 120 450 130, clip=true]{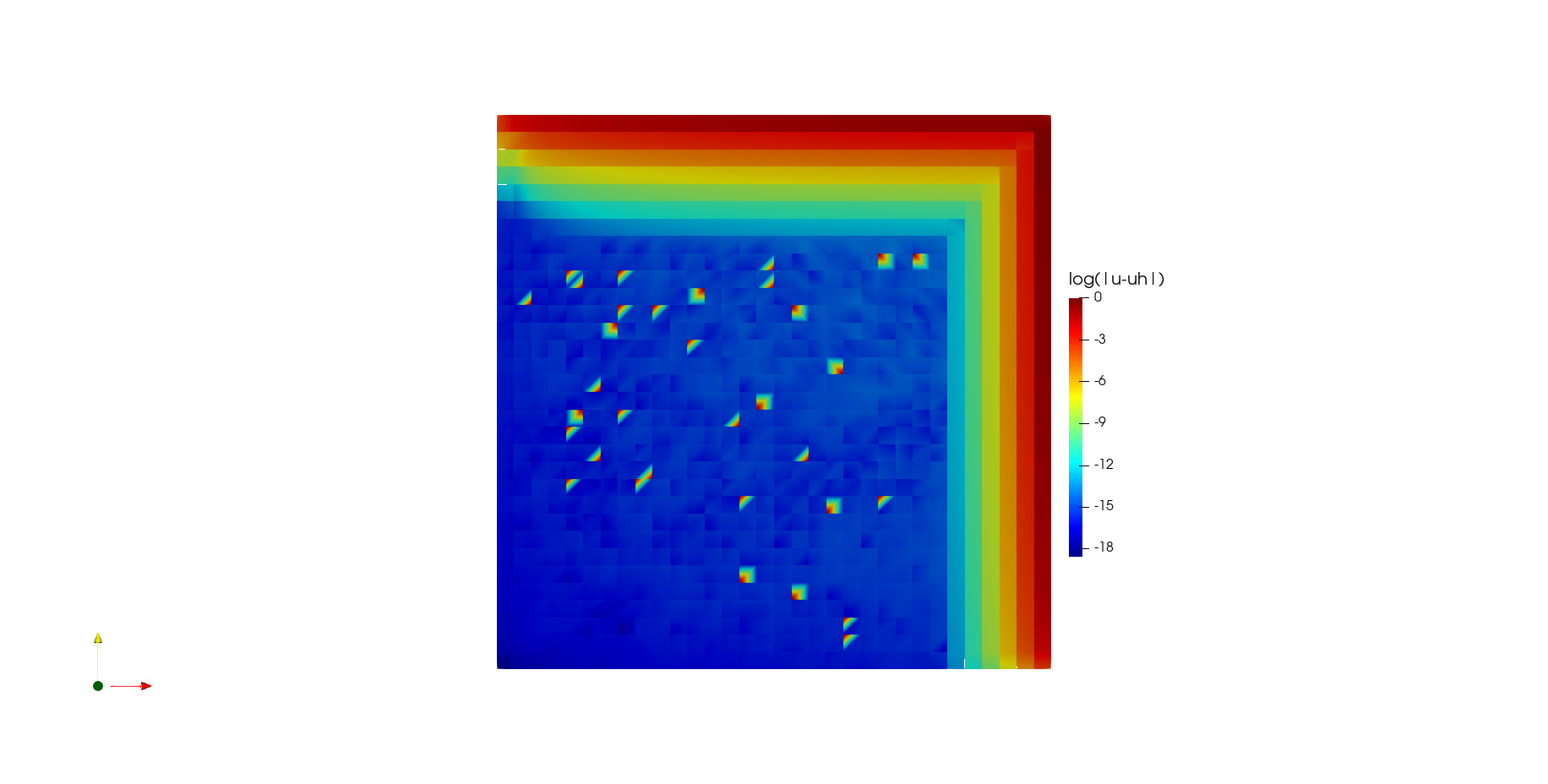}}
\caption{(a) Representation of the linear discrete solution $\du$ ($k=1$) using the Add-scheme for $\kappa=5e^{-3}$ and (b) its corresponding $\LL$-error spatial distribution. (c) Representation of the linear discrete solution $\du$ ($k=1$) using the SG-scheme for $\kappa=5e^{-3}$ and (d) its corresponding $\LL$-error spatial distribution.}
\label{Test-non-degenerate}
\end{figure}

\subsection{Fully degenerate problem}

In the second example, we analyze the behavior of the H-IP method in the context of hyperbolic problems. Let us notice that the discrete bilinear operator as defined in \eqref{HIP-method} is reduced to its advective-reactive part since $\bkappa$ is null throughout the whole domain $\Domain=[0,1]^2$. Thus, we set $\bbeta\eqbydef(2,1)$, $\gamma\eqbydef 1$ and $f\eqbydef 0$. The following exact solution is prescribed
\begin{equation}
    \label{heaviside}
    u(x,y) = H(-x+2y-1)\quad\textrm{for all}\quad(x,y)\in\Domain,
\end{equation}
where $H(\cdot)$ denotes the Heaviside function. Dirichlet boundary conditions are imposed (only) for the degenerate inflow part $\gammam$ -- \ie the left and bottom boundaries of $\Domain$. The exact solution displays a discontinuity along the characteristic direction $\bbeta$ due to the jump in imposed boundary conditions at $x=0$.  We also assume that the actual location of internal layers is unknown, and an adaptive mesh refinement strategy is investigated to capture them. Let us precisely state that resulting meshes are (generally) not aligned with the characteristic direction $\bbeta$. We analyze here the role of the upwind-parameter $\theta$ (see \eg Definition \ref{up_tau}) with respect to the accuracy of discrete solutions for different polynomial degrees $k\eqbydef\{1,2\}$. Since the exact solution \eqref{heaviside} is only piecewise constant, increasing polynomial degrees can only yield better discrete approximations near internal layers. Following \figref{Heaviside-solutions}, let us notice first that the $\theta$-upwind scheme effectively handles outflow boundary conditions for any selected value of $\theta $: this observation agrees with Remark \ref{degenerate-out-bound}. However, the increase in $\theta$ significantly deteriorates the discrete solution near the internal layer for all polynomial degrees since there is more erratic behavior and/or a more considerable artificial numerical diffusion: this observation agrees with Remark \ref{traditional-schemes}. Thus, we will favor the traditional upwinding scheme obtained by selecting $\theta=1$ in hyperbolic regions in all future experiments.\par
\begin{figure}[!ht]
\centering
\subfloat[$k=1$ and $\theta = 1$]{\includegraphics[scale=0.21,trim = 555 120 555 130, clip=true]{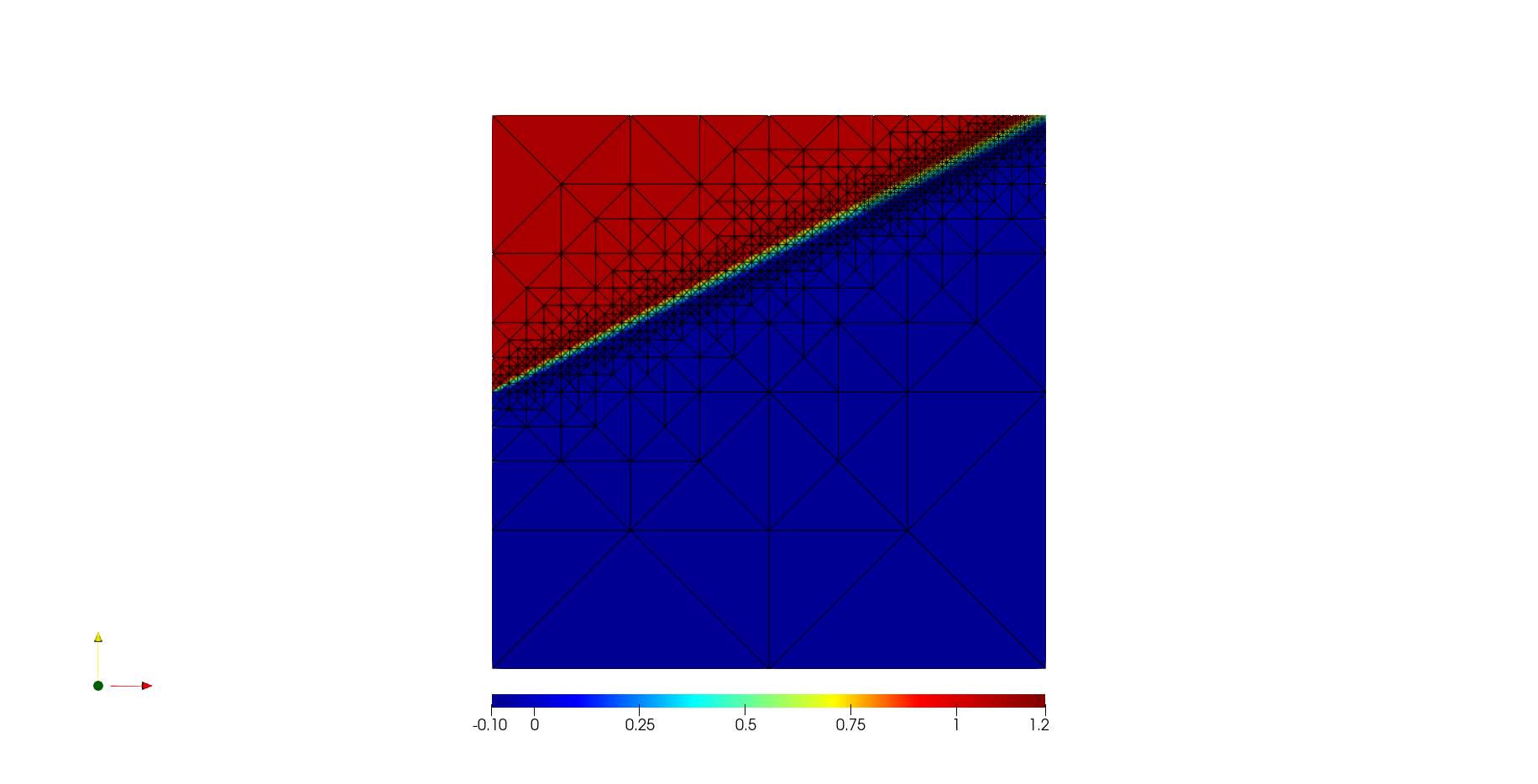}}  
\subfloat[$k=1$ and $\theta = 10$]{\includegraphics[scale=0.21,trim = 555 120 555 130, clip=true]{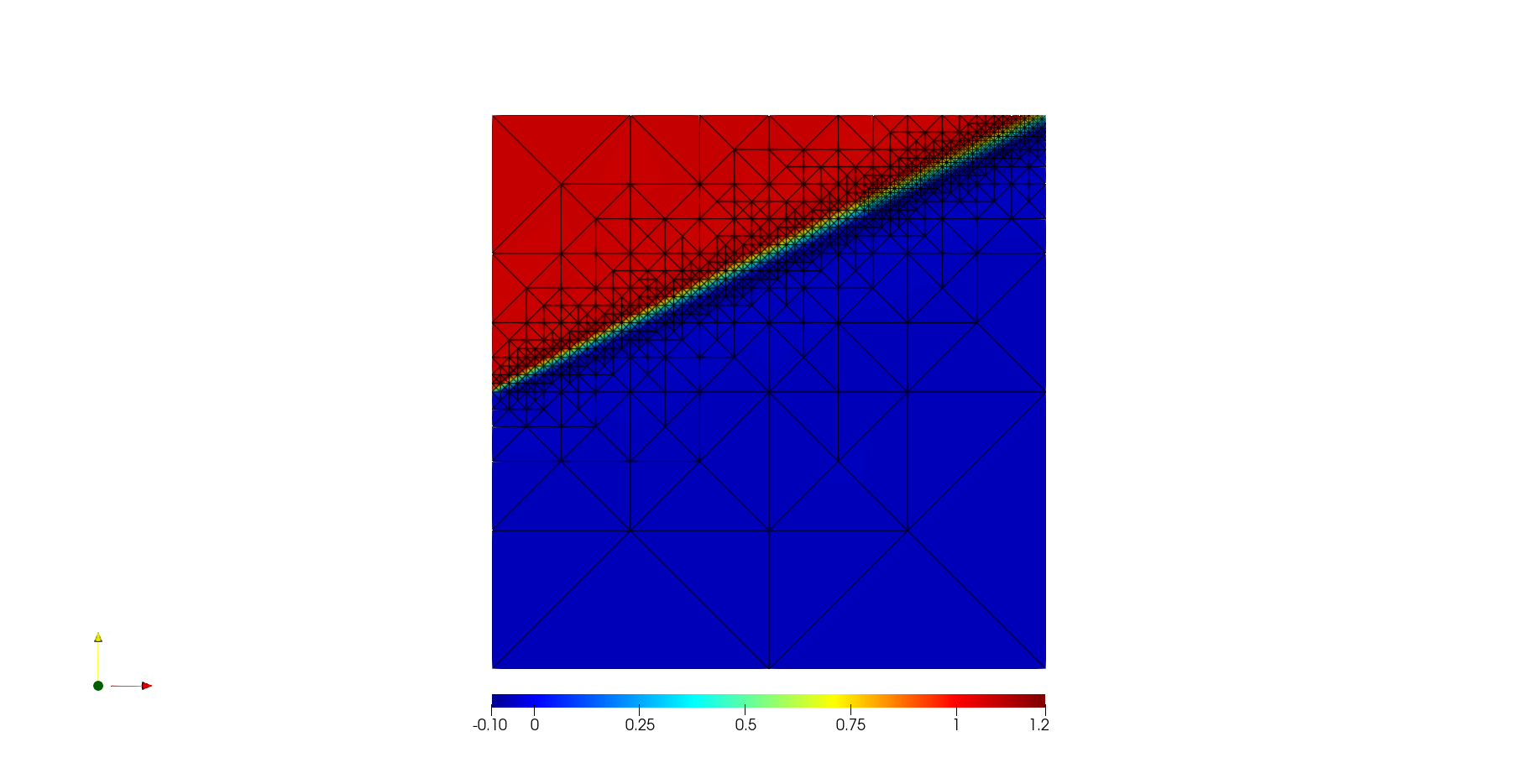}} 
\subfloat[$k=1$ and $\theta = 100$]{\includegraphics[scale=0.21,trim = 555 120 555 130, clip=true]{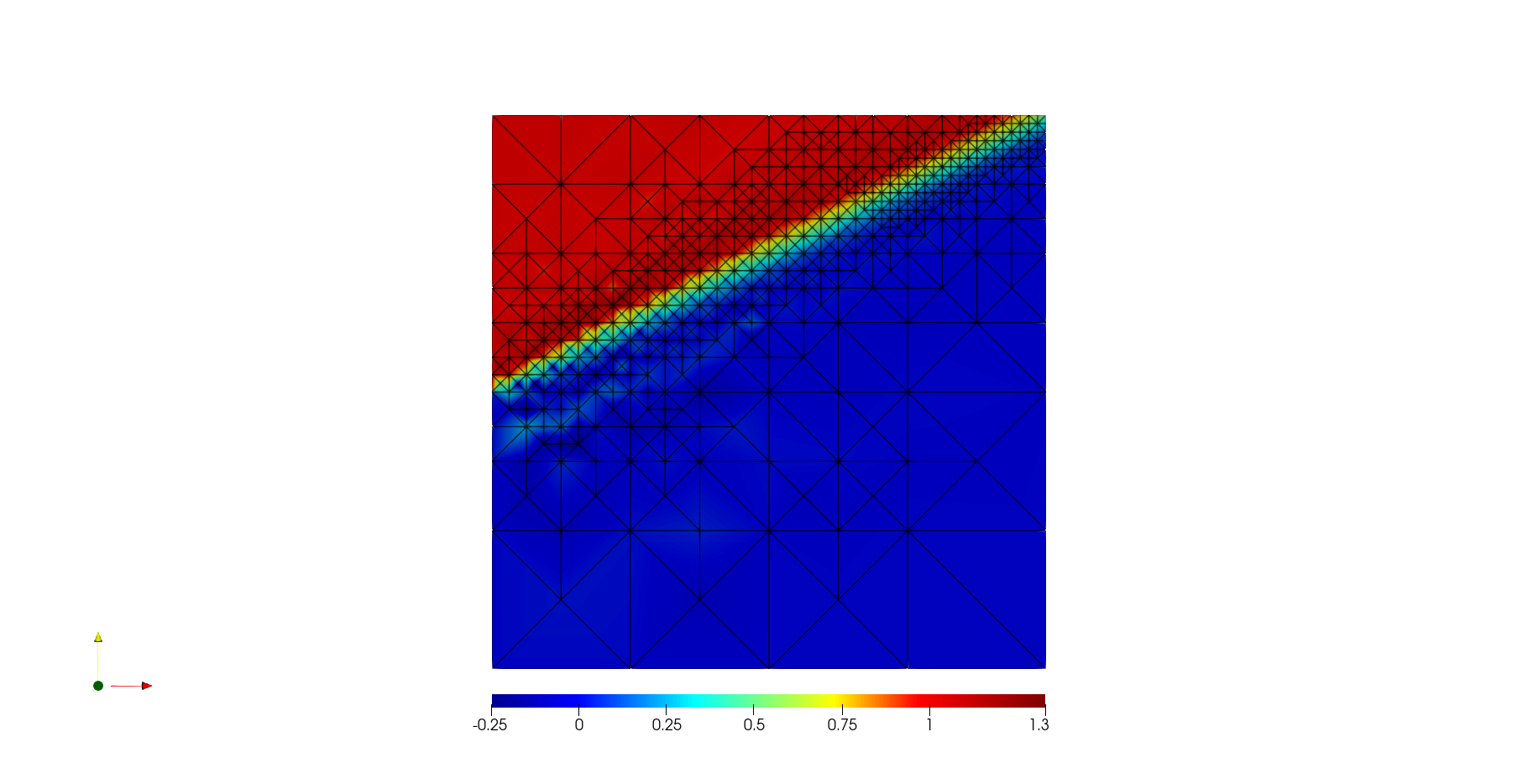}} \\
\subfloat[$k=2$ and $\theta = 1$]{\includegraphics[scale=0.21,trim = 555 120 555 130, clip=true]{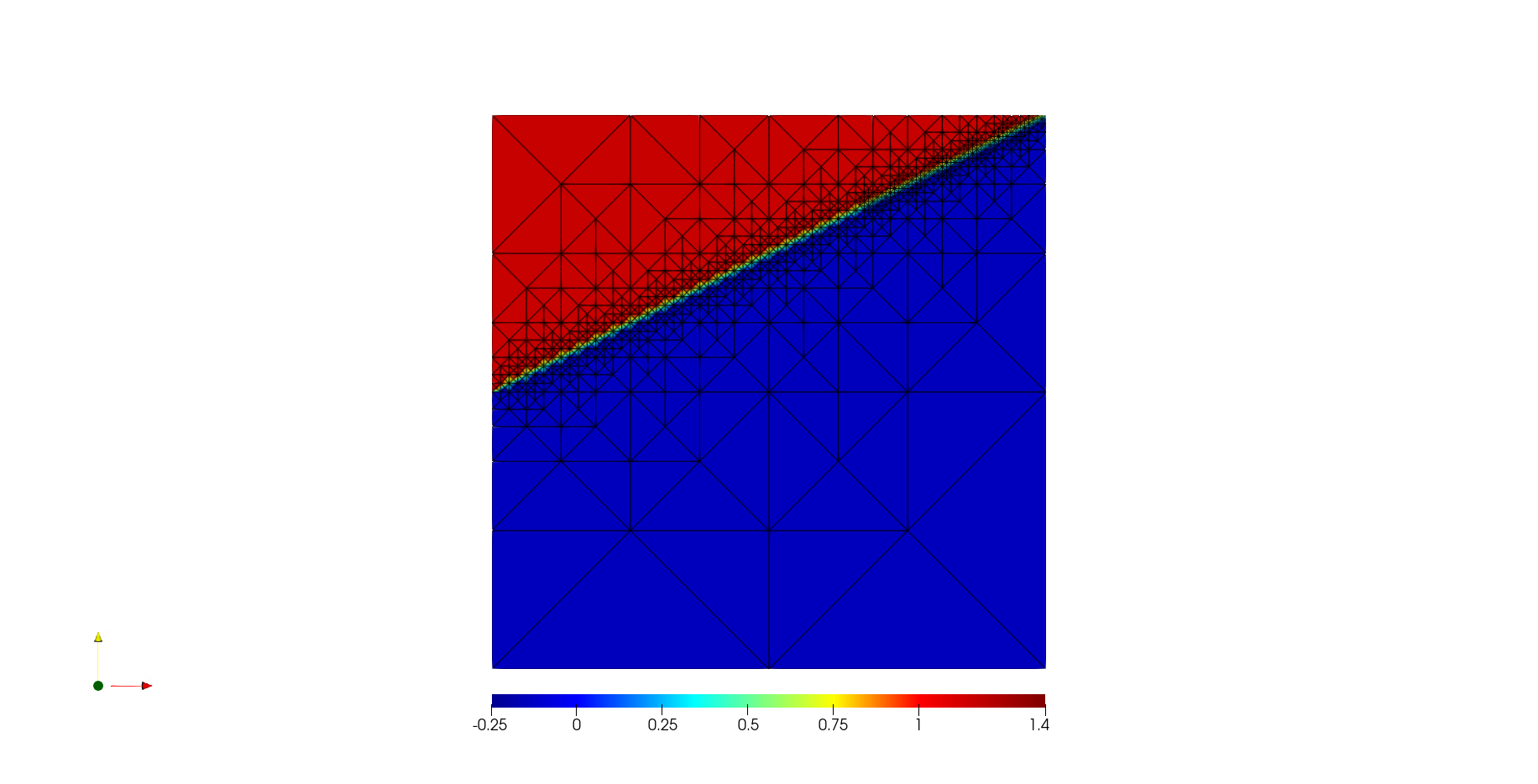}}  
\subfloat[$k=2$ and $\theta = 10$]{\includegraphics[scale=0.21,trim = 555 120 555 130, clip=true]{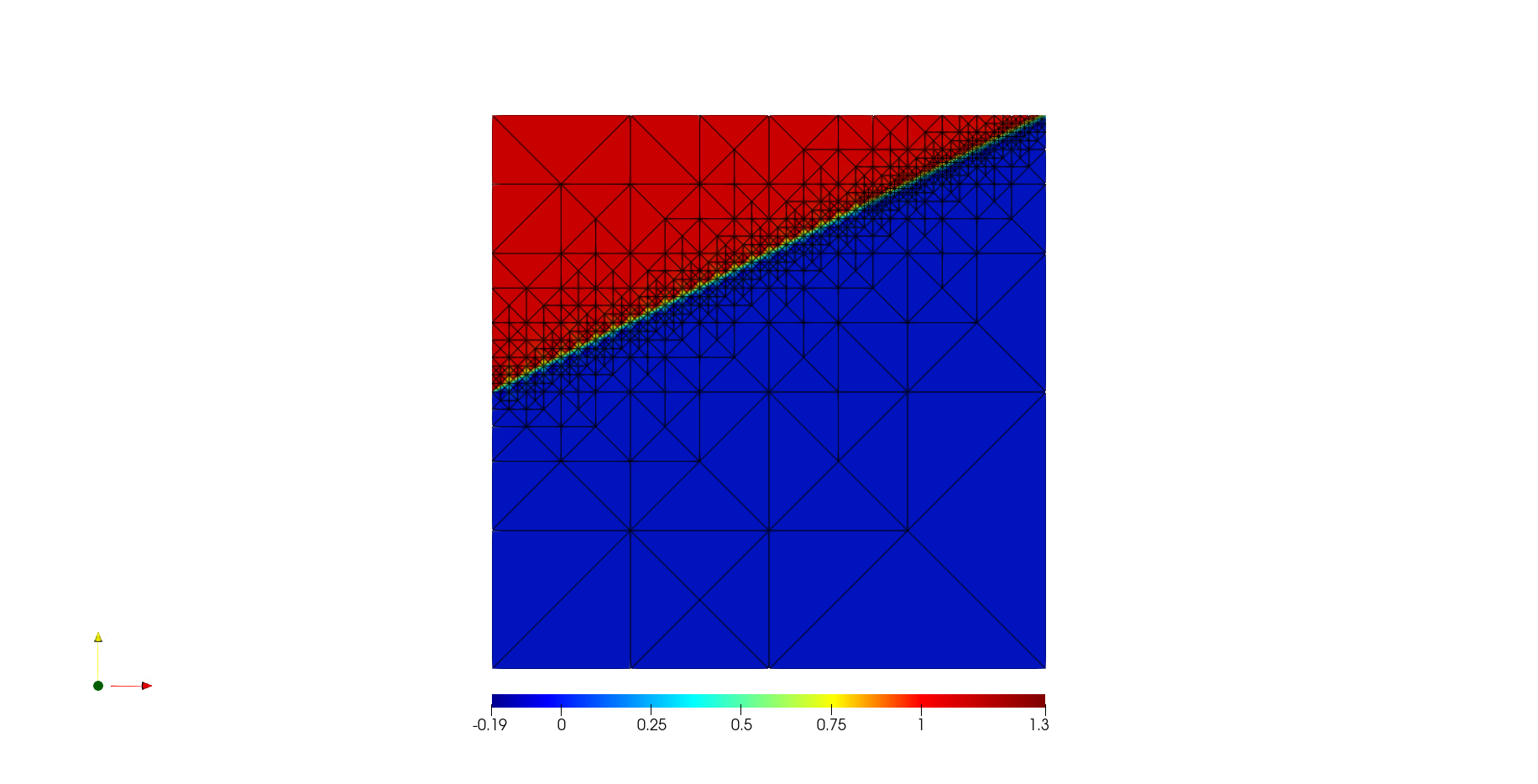}} 
\subfloat[$k=2$ and $\theta = 100$]{\includegraphics[scale=0.21,trim = 555 120 555 130, clip=true]{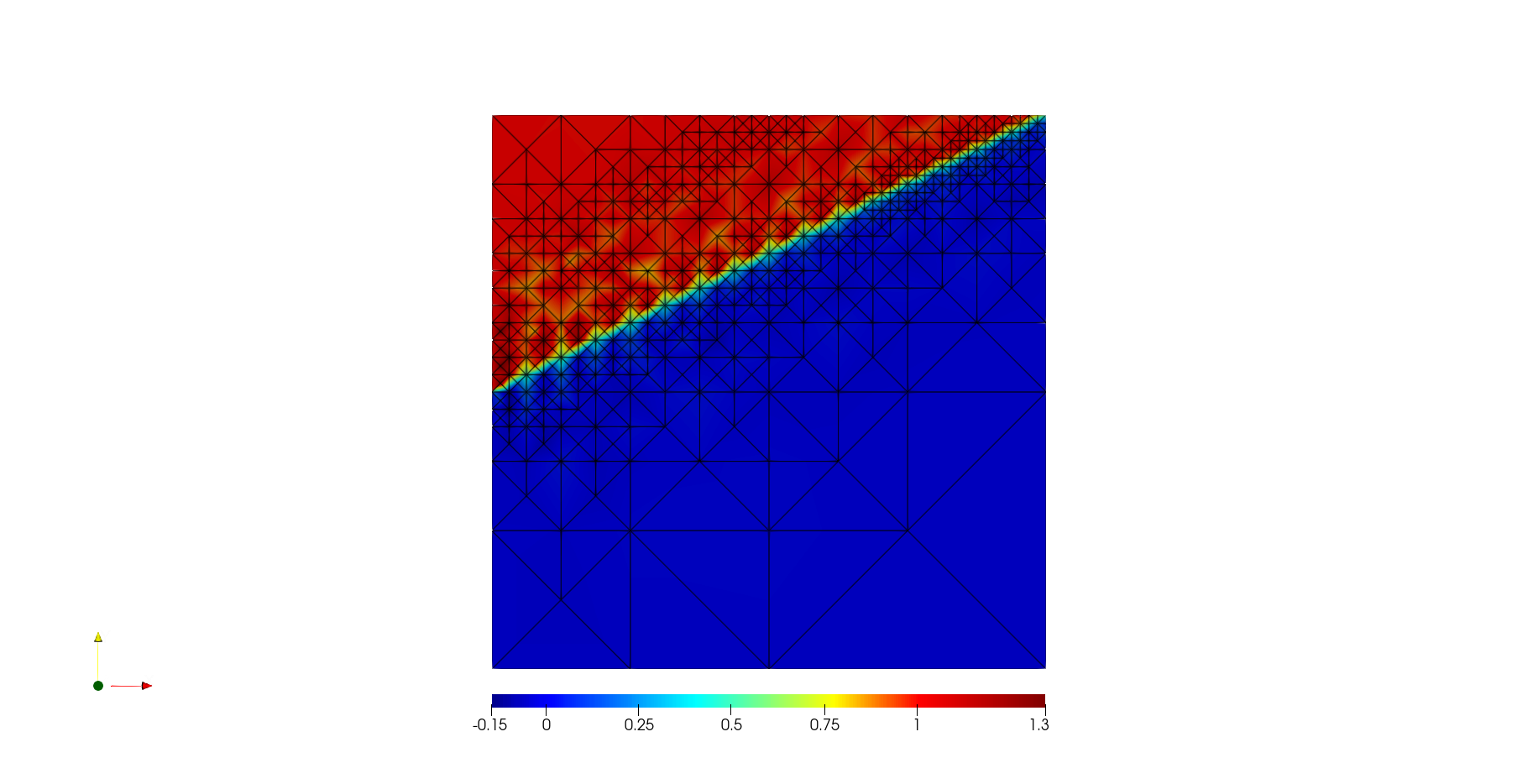}}
\caption{Representation of linear (a)-(b)-(c) and quadratic (d)-(e)-(f) discrete solutions obtained on adaptive meshes for different values of the upwind-parameter $\theta=\{1,10,100\}$, respectively.}
\label{Heaviside-solutions}
\end{figure}

\subsection{Locally degenerate problem}

In the last example, we validated the proposed H-IP formalism in the context of locally degenerate diffusion. To achieve this aim, we consider a slight modification to the test case proposed by Di Pietro \al in \citep{CDG2008} (see \eg \citep{dipietro:hal-01079342}). The domain is now taken to be $\Domain\eqbydef[-1,1]^2\backslash\{x^2+y^2<\frac{1}{4}\}$, which is divided into two disjoint subdomains $\Domain^{\textrm{ell}}$ and $\Domain^{\textrm{hyp}}$ corresponding to the elliptic and hyperbolic parts, respectively (see \eg \figref{Test-locally-degenerate}a). Denoting by $(r,\theta)$ the polar coordinates (with azimuth $\theta$ measured in the anticlockwise sense starting from the positive $x$-axis) and by $\vect{e_{\theta}}$ the (unit) azimuthal vector, the problem coefficients are 
\begin{equation}
    \kappa = \left\{ \begin{array}{ll}
        \pi & \textrm{if } 0 < \theta \leq \pi, \\
         0  & \textrm{if } \pi < \theta \leq 2 \pi ,
    \end{array} \right. \qquad
    \bbeta = \frac{\vect{e_{\theta}}}{r}, \qquad
    \gamma = 1e-6 , 
\end{equation}
and $\bkappa\eqbydef\kappa \vect{I_2}$. As illustrated in \figref{Test-locally-degenerate}b, the exact solution is given by,
\begin{equation}
    \label{exact-sol-locally-degenerate}
    u(r,\theta) = \left\{ \begin{array}{ll}
        (\theta - \pi)^2 & \textrm{if } 0 < \theta \leq \pi, \\
        3 \pi (\theta - \pi) & \textrm{if } \pi \leq \theta < 2 \pi .
    \end{array} \right.
\end{equation}
and is used to infer the forcing term $f$ and the imposed Dirichlet boundary datum. Here, we focus only on the H-SIP method using the Scharfetter--Gummel scheme in the elliptic region, and the H-IP method using the $\theta$-upwind stabilization strategy in the hyperbolic one. For clarity of our exposition, we set $\theta\eqbydef 1$ in both regions, but it is quite possible to fix distinctive values in each subdomain. Standard $h$- and $k$-refinement strategies are used to compute the discrete $\LL$-errors and estimated convergence rates (ECRs). A history of convergence is illustrated in \figref{convergence-degenerate}a for different polynomial degrees $k\eqbydef\{1,\ldots,5\}$. We observe that the convergence rate is optimal with order $k+1$ for all $k$.\par
\begin{figure}[!ht]
 \centering
 \subfloat[]{\begin{tikzpicture}[scale=1]
 %\draw [very thin, gray] (0,0) grid [step=2](6,6);
 % Define coordinates of the square
 \coordinate (A) at (0,0) ; 
 \coordinate (AI) at (3,0) ;
 \coordinate (B) at (6,0) ;
 \coordinate (BI) at (6,3) ;
 \coordinate (C) at (6,6) ;
 \coordinate (CI) at (3,6) ;
 \coordinate (D) at (0,6) ;
 \coordinate (DI) at (0,3) ;
 \draw[thick] (BI) -- (C) -- (CI) -- (D) -- (DI) -- (A) ;
 \draw[thick] (AI) -- (B) ;
 \draw [thick,densely dashdotted] (A) -- (AI);
 \draw [thick,densely dashdotted] (B) -- (BI);
 
 \draw[ thick] (4.5,3) arc(0:180:1.5);
 \draw[ thick,densely dashdotted] (1.5,3) arc(180:360:1.5);

 % Define the centers of the square
 \coordinate (a) at (0,3) ; 
 \coordinate (b) at (6,3) ;
 \draw[ thick,densely dashdotted,carmine] (a) -- ++(1.5,0);
 \draw[ thick,carmine] (b) -- ++(-1.5,0);
 
 % Velocity field
 \draw[ thick, ->] (5.25,2.75) -- ++(0,0.5);
 \draw[ thick, ->] (0.75,3.25) -- ++(0,-0.5);
 
 % Anisotropic tensor
 \coordinate (A1) at (1.25,1.25) ; 
 \coordinate (B1) at (3.75,1.25) ;
 \coordinate (C1) at (3.75,3.75) ;
 \coordinate (D1) at (1.25,3.75) ;
  
 % Some notations
 \draw (3.25,0.9) node {$\Domainh $};
 \draw (3.25,5.25) node {$\Domaine $};
 \draw[thick] (0.95,3.5) node {$\interfp$}; 
 \draw [thick](5.4,2.5) node {$\interfm$}; 
 
 \draw [thick](4.28,4.28) node {$\gammam$}; 
 \draw [thick](1.75,1.75) node {$\gammap$};
\end{tikzpicture}}
\subfloat[]{\includegraphics[scale=0.26,trim = 420 130 350 150, clip=true]{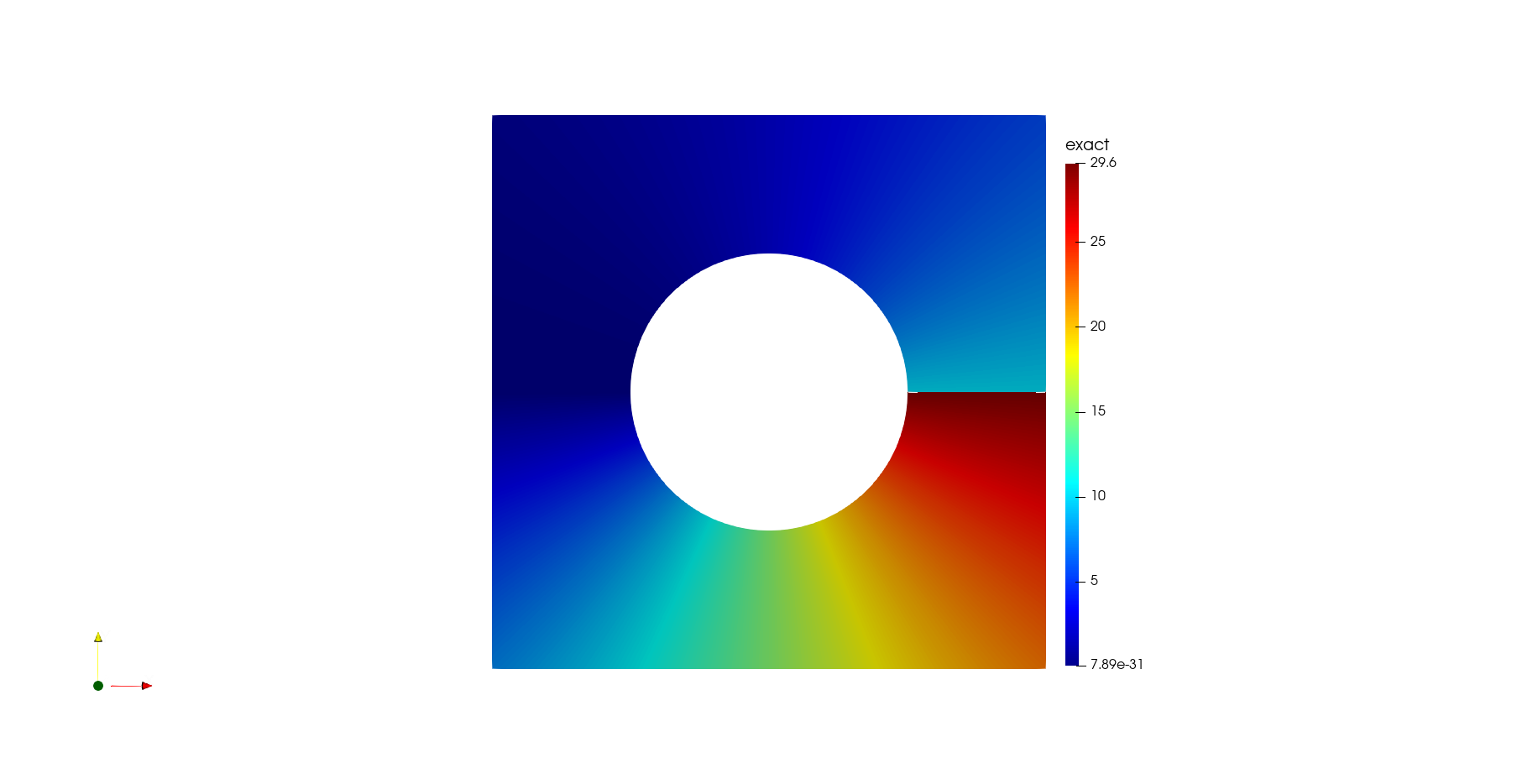}}
\caption{(a) Description of the locally degenerate test case: the continuous black line denotes the nondegenerate inflow boundary $\gammam$ where Dirichlet conditions are imposed, the dashed black line indicates the degenerate no-flow/outflow boundary $\gammap$, and the continuous red line represents the interior interface $\interfm$  where the exact solution is discontinuous. (b) Illustration of the exact solution \eqref{exact-sol-locally-degenerate}.}
\label{Test-locally-degenerate}
\end{figure}
\begin{figure}[!ht]
\centering
\subfloat[]{\begin{tikzpicture}[scale=0.90]
	\begin{loglogaxis}[
		xlabel=$\log \left( 1/h \right)$,
		ylabel=$\log \left( \norm{u-\du}{0}{\Th{}} \right)$, legend cell align=left, legend pos = south east, font=\tiny]
	%\addlegendimage{empty legend}
		
%	\addplot[color=Sepia,mark=x] coordinates {
%		(54,1.5e+00) (216,8.2e-01) (864,4.1e-01) (3456,2.1e-01) (13824,1.1e-01)
%	};
	\addplot[mark=*] coordinates {
		%(1/2,1.2e+00) 
		(1/4,2.1e-01) (1/8,5.2e-02) (1/16,1.3e-02) (1/32,3.1e-03) (1/64,7.9e-04)
	};
	\addplot[mark=triangle] coordinates {
		%(1/2,9.6e-01) 
		(1/4,4.7e-02) (1/8,2.8e-03) (1/16,2.2e-04) (1/32,2.5e-05) (1/64,2.9e-06)
	};
	\addplot[mark=square] coordinates {
		%(1/2,2.1e-01) 
		(1/4,5.5e-03) (1/8,1.9e-04) (1/16,8.0e-06) (1/32,4.5e-07) (1/64,2.7e-08)
	};
	\addplot[mark=x] coordinates {
		%(1/2,1.6e-01) 
		(1/4,1.3e-03) (1/8,1.8e-05) (1/16,3.2e-07) (1/32,7.9e-09) (1/64,2.4e-10)
	};
	\addplot[mark=o] coordinates {
		%(1/2,3.2e-02) 
		(1/4,1.8e-04) (1/8,1.1e-06) (1/16,9.8e-09) (1/32,1.4e-10) %(1/64,1.9e-11)
	};
	\draw (axis cs:1/32,3.1e-03) |-  (axis cs:1/64,7.9e-04) node[near end,above] {$ $} node[near start,right] {$2.00$};
	\draw (axis cs:1/32,4.5e-07) |-  (axis cs:1/64,2.7e-08) node[near end,above] {$ $} node[near start,right] {$4.10$};
	\draw (axis cs:1/16,9.8e-09) |-  (axis cs:1/32,1.4e-10) node[near end,right] {$ $} node[near start,right] {$6.20$};
    %\addlegendentry{\hspace{-.6cm}\textbf{Erreur en norme $L_2$}}
    \legend{$k=1$, $k=2$, $k=3$, $k=4$, $k=5$}

	\end{loglogaxis}
\end{tikzpicture}}
\subfloat[]{\includegraphics[scale=0.24,trim = 480 45 350 90, clip=true]{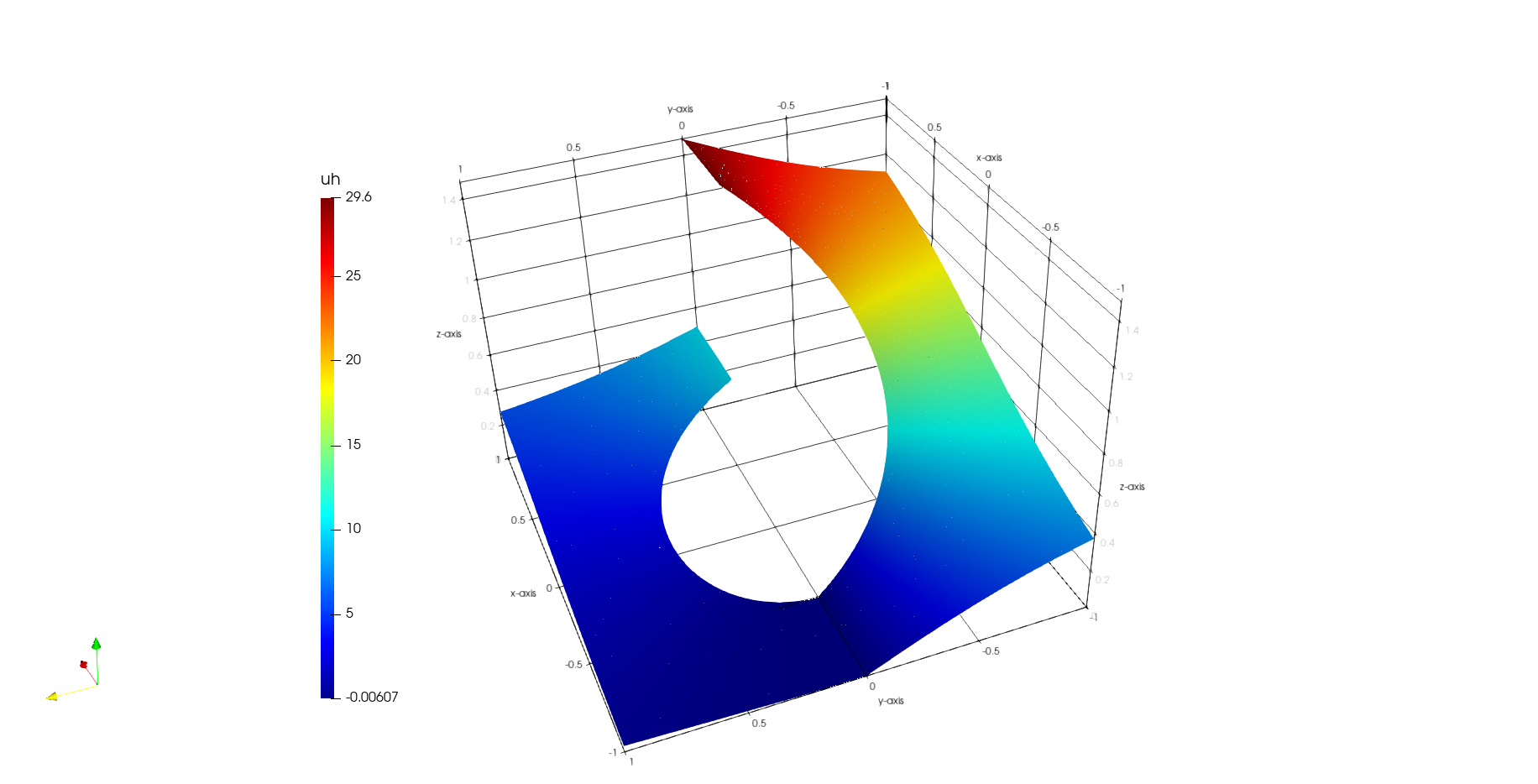}}
\caption{(a) History of convergence error in the $L_2$-norm for different polynomial degrees $k=\{1,\ldots,5\}$. (b) Representation of $\du$ using piecewise linear approximations ($k=1$) on a fine mesh ($h=1/64$).}
\label{convergence-degenerate}
\end{figure}

\section{Conclusion \& Perspectives}

We have derived a compact interior penalty HDG method for solving degenerate advection-diffusion-reaction problems. The proposed H-IP method can efficiently handle pure diffusive or advective regimes, along with intermediate regimes combining the above mechanisms for a wide range of P\'eclet numbers, including the delicate situation of local evanescent diffusion. An adaptive stabilization strategy is carried out, automatically accounting for the mathematical nature of \eqref{continuous-deg-problem} and the predominance of the diffusion or advection mechanisms. An upwinding-based scheme was favored for the hyperbolic region, and an inspired Scharfetter--Gummel scheme was preferred for the elliptic region. One undeniable advantage of this strategy is the possibility of tuning the amount of upwind. The stability analysis indicates that all considered variants are consistent and coercive, ensuring the well-posedness of the discrete problem in all regimes. The flexibility and accuracy of the proposed method are confirmed by numerical lines of evidence.

\section*{Acknowledgements}
\label{Ack}
By convention, the names of the authors are listed in alphabetical order. The third author is grateful to Sander Rhebergen for his invitation to the Department of Applied Mathematics at the University of Waterloo (UW) in April 2019. The present work is undoubtedly the fruit of our first discussions and investigations during my visit to the UW.

%The principal author would like thanks to the Pr. S. Rhebergen for his invitation and fruitful discussions on hybrid interior penalty method and its implementation via NGsolve software during his visit at the University of Waterloo in March 2019. %Finally, the authors thank the anonymous referees for their insightful suggestions leading to a better presentation of this material.
%\section{References}
%\footnotesize

\bibliography{HIP_Deg}

\end{document}